\newtheorem{theorem}{Theorem}[section]
\newtheorem{corollary}[theorem]{Corollary}
\newtheorem{example}[theorem]{Example}
\newtheorem{lemma}[theorem]{Lemma}
\newtheorem{proposition}[theorem]{Proposition}
\newtheorem{remark}[theorem]{Remark}
\begin{document}


\title{A unifying framework for ADI-like methods for linear matrix equations and beneficial consequences}

\author[$\dagger\ast$]{Jonas Schulze \orcidlink{0000-0002-2086-7686}}
\author[$\dagger$]{Jens Saak \orcidlink{0000-0001-5567-9637}}
\affil[$\dagger$]{Max Planck Institute for Dynamics of Complex Technical Systems, \authorcr%
  Sandtorstr. 1, 39106 Magdeburg, Germany.}
\affil[$\ast$]{Corresponding author.\ \email{jschulze@mpi-magdeburg.mpg.de}}

\shortauthor{J.~Schulze, J.~Saak}
\shortdate{}

\keywords{commuting splitting scheme, commuting operator split, low-rank Lyapunov ADI, complex data, non-zero initial value}

\msc{15A24, 65F10, 65F45, 65F55}

\abstract{
%

We derive the \ac{ADI} method based on a commuting operator split
and apply the results \chadded{in detail} to the continuous time algebraic \Lyapunov{} equation
with low-rank constant term and approximate solution,
\chadded{giving pointers for the Sylvester case.}
Previously, it has been mandatory to start the low-rank ADI
\chreplaced{%
  for Lyapunov equations (CF-ADI, LR-ADI, G-LR-ADI)\@ or Sylvester equations (fADI, G-fADI)%
}{%
  (LR-ADI)%
}
with an all-zero initial value.
Our approach \chreplaced{extends}{retains} the known efficient iteration schemes of low-rank increments and residuals
to arbitrary low-rank initial values for \chreplaced{all these methods}{the LR-ADI method}.
We further generalize
\chreplaced{%
  two properties of the low-rank \Lyapunov{} ADI
  to the generic ADI applied to arbitrary linear equations using a commuting operator split, namely
  the invariance of iterates under permutations of the shift parameters, and
  the efficient handling of complex shift parameters.%
}{%
  some of the known properties of the LR-ADI for \Lyapunov{} equations
  to larger classes of algorithms or problems.%
}

We investigate the performance of arbitrary initial values using two outer
iterations in which \chreplaced{the low-rank \Lyapunov{} ADI}{LR-ADI} is typically called.
First, we solve an algebraic \Riccati{} equation with the Newton method.
Second, we solve a differential \Riccati{} equation with a first-order Rosenbrock method.
Numerical experiments confirm that the proposed new initial value of the \ac{ADI}
can lead to a significant reduction in the total number of \ac{ADI} steps,
while also showing a \qty{17}{\%} and $8\times$ speed-up over the zero initial value for the two equation types, respectively.
}

\novelty{%
We introduce the notion of fully commuting splitting schemes to solve arbitrary linear systems,
and derive the \ac{ADI} method in that context.
This allows us to extend the low-rank \Lyapunov{} \ac{ADI} to non-zero initial values.
Furthermore, we generalize the permutation invariance of \ac{ADI} iterates
to a more general class of algorithms,
as well as the existence of a real-valued \ac{ADI} double-step for complex-conjugated shifts
to arbitrary linear systems.
}

\maketitle

%

\section{Intro}

We consider the numerical solution of the continuous-time \ac{ALE}
\begin{equation}%
\label{eq:intro}
  A X + X A^\HT = - GSG^\HT
\end{equation}
with large and sparse coefficient matrix $A\in\C^{n\times n}$
and a low-rank constant term comprised of the factors $G\in\C^{n\times g}$ and $S\in\C^{g\times g}$,
where $g\ll n$.
This type of equation arises in, \eg~optimal control and model order reduction.
We refer to~\cite{morAnt05,BenS13,Sim16} and the references therein for a more detailed introduction.

Given the low rank of the right-hand side of~\eqref{eq:intro},
the solution can, at least numerically, be well approximated by a low-rank factorization~\cite{Pen00b}.
We choose the symmetric indefinite factorization, \mbox{$X\approx \ZYZ^\HT$},
with tall and skinny $Z\in\C^{n\times z}$ and Hermitian $Y\in\C^{z\times z}$, $z\ll n$,
~\cite{BenLP08,LanMS15}.
For this kind of large-scale equations with low-rank solutions,
the most successful algorithms used recently are the Krylov subspace projection method,
\eg~\cite{morJaiK94,Sim07,PalS18,KreLMetal21},
as well as the \ac{ADI} method~\cite{Pen00b,LiW02,BenLP08,morBenKS13,Wac13}.
The latter will be the focus of this paper.

The low-rank \Lyapunov{} \ac{ADI} had been derived and analyzed
using Butcher tableaus for \acp{ODE}~\cite{BerF21,BerF21a},
as a Smith method~\cite{morGugL05},
as a Krylov method~\cite{WolP16,BerF20},
or by means of Cayley transformations~\cite{Kue16}.
We revise the derivation of the \ac{ADI} method using the language of linear splitting methods following~\cite[Section~5]{Sch22}.
The \ac{ADI} requires the solution of a linear system at every iteration,
which is the dominant part of the run-time of the algorithm.
Thus far,
for every derivation of the low-rank \Lyapunov{} \ac{ADI} \chdeleted{(LR-ADI)},
it has been mandatory to use an all zero initial guess, \mbox{$X_0=0$},~%
\cite{Pen00b,LiW02,BenLP08,BenLT09,BenKS12a,BenS13,morBenKS13,BenKS13,morBenKS13a,LanMS15}.
Our motivation is to reduce the number of \ac{ADI} iterations
by extending the method to arbitrary low-rank initial values, \mbox{$X_0\neq 0$}.
We generalize the iteration scheme of \citeauthorLiW~\cite{LiW02}
and the residual formulation of \citeauthorBenKS~\cite{morBenKS13}
to arbitrary low-rank initial values for the \chreplaced{low-rank \Lyapunov{} \ac{ADI}}{LR-ADI} method at the expense of an indefinite residual.

We investigate the performance of improved initial values using two outer
iterations in which \chreplaced{the low-rank \Lyapunov{} \ac{ADI}}{LR-ADI} is typically called.
First, we solve an \ac{ARE} with the Newton-ADI method;
see \eg{} \citeauthorBenLP~\cite{BenLP08}.
Every Newton step requires the solution of one \ac{ALE}.
Especially close to convergence of the Newton method, the solution of the
previous Newton step is expected to be a good candidate to start the ADI with.
\chdeleted{%
We investigate how much an LR-ADI warm-start of the Newton-ADI method
can close the performance gap compared to the \ac{RADI} method of \citeauthorBenBKetal~\cite{BenBKetal18}.
}

Second, we solve a \ac{DRE} with a first-order Rosenbrock method.
Each time step requires the solution of an \ac{ALE};
see \eg{} \citeauthorLanMS~\cite{LanMS15}.
Due to the smoothness of the solution, the solution at the previous time step
is a natural candidate to start the next ADI with.
Here, we are only concerned with an autonomous equation.
\chdeleted{and again we
investigate how much an LR-ADI warm-start in each time step can close the
performance gap to projection-based solvers for the \ac{DRE}\@.}
We expect the real
benefits for the non-autonomous case, where no alternative solvers exist, and
will separately report the results together with similar approaches for BDF methods for the non-autonomous \ac{DRE} along the lines
of~\cite{BarBS22, BarBSetal24}.

Throughout the paper, $\norm{\cdot}$ denotes the Frobenius norm.
$\Re(a+bi)=a$ and $\Im(a+bi)=b$ denote the real and complex part of a complex scalar (or matrix), respectively; $i=\sqrt{-1}$ and $a,b\in\R$.
We denote complex conjugation by $\conj{a+bi} = a-bi$.
$\mathbb{F}^{m\times n}$ denotes the space of $m$-by-$n$ matrices with entries in the field~$\mathbb{F}$.
Transposition of a real matrix is denoted by~${(\cdot)}^\TT : \R^{m\times n}\to\R^{n\times m}$,
Hermitian transposition of a complex matrix by~${(\cdot)}^\HT : \C^{m\times n}\to\C^{n\times m}$.
$I_q$ denotes the identity matrix of size~$q\in\N$.
Whenever we refer to general linear operators,
or the matrix dimensions are evident from the context,
we omit the subscript. To simplify notation in many places, we formulate the
concatenation of linear operators as multiplication, \ie \(A(B(X)) = AB(X) = ABX\).
Spectrum and spectral radius are denoted by $\Lambda(\cdot)$ and $\rho(\cdot)$, respectively.

The paper is structured as follows.
In \autoref{sec:splitting schemes},
we generalize the notion of splitting schemes and prove some of the known properties of the \chreplaced{low-rank \Lyapunov{} \ac{ADI}}{LR-ADI} in this more general context.
Afterwards, in \autoref{sec:adi},
we derive the ADI for arbitrary linear systems using the framework described in \autoref{sec:splitting schemes},
while generalizing some of the properties known in the \Lyapunov{} case.
\autoref{sec:lyapunov} specializes the ADI for \Lyapunov{} equations.
In \autoref{sec:applications} we present the two applications mentioned above,
as well as some numerical experiments.
We conclude the paper in \autoref{sec:conclusion}.

\section{Nonstationary Splitting Schemes}%
\label{sec:splitting schemes}
Many iterative methods solving $Ax=b$ can be written in a one-step \emph{splitting} form
\begin{equation}
  Mx^{k+1} = Nx^k + b
\end{equation}
where $M-N = A$ and systems $Mx = d$ are \enquote{easy} to solve~\cite[Section~11.2.3]{GolV13}.
These methods are \emph{consistent} by construction,
\ie every solution to $Ax=b$ is a fixed point of the iteration.
Conversely, every fixed point $x^{k+1} = x^k$ is a solution as well.
Assuming that $M^{-1}$ exists,
the method converges if the spectral radius $\rho(G) < 1$ for $G:=M^{-1}N$~\cite[Theorem~11.2.1]{GolV13}.
$G$ is called \emph{iteration matrix} of the scheme.

The structure above is called \emph{first normal form} of the method~\cite[Chapter~2]{Hac16}.
Consistency is equivalent to $M-N=A$.
The \emph{second normal form} of a consistent linear iteration method reads
\begin{equation}
  x^{k+1} = M^{-1} \big( (M-A) x^k + b \big) = x^k - M^{-1} (Ax^k - b)
  .
\end{equation}

Furthermore, a splitting method is called \emph{nonstationary}
\unskip\footnote{%
  Schulze~\cite{Sch22} called this family of algorithms \emph{parametrized} splitting schemes.
}
if the iteration matrix depends on the iteration,
\ie $A = M_{k} - N_{k}$ for $k\in\N$.
The normal forms are thus given by
\begin{subequations}%
\label{eq:normalform}
\begin{align}%
\label{eq:normalform:1}
  M_{k} x^{k+1} &= N_{k} x^k + b
  ,
  \\
\label{eq:normalform:2}
  x^{k+1} &= x^k - M_{k}^{-1} (Ax^k - b)
  .
\end{align}
\end{subequations}
Here, again, we assume that $M_k^{-1}$ exists. We will see in the later sections
that this is actually not a strong assumption in the context of this article.

\subsection{Commuting Splitting Schemes}
We call an operator split $A=M_{k}-N_{k}$
and the corresponding iterative method~\eqref{eq:normalform}
\emph{commuting} if $M_{k}, N_{k}$ commute, \ie
\begin{equation}\label{eq:commute}
  M_{k} N_{k} = N_{k} M_{k}
  \qquad
  \forall k \in\N.
\end{equation}

It is a well known fact that the iteration matrix $G_{k}$ determines
the evolution of the \emph{error} $e^k := x^k - x^*$, where $Ax^* = b$.
Due to the consistency of the method, $M_{k} x^* = N_{k} x^* + b$ holds.
Subtract this from the first normal form~\eqref{eq:normalform:1} to obtain
$M_{k} e^{k+1} = N_{k} e^k$ or, equivalently,
\begin{equation}%
\label{eq:adi:error-recursion}
  e^{k+1} = G_{k} e^k
  .
\end{equation}
That the same recursion holds for the \emph{residual} $r^k := Ax^k - b$, is an interesting observation for commuting operator splits.

\begin{proposition}[{Residual Recursion~\cite[Proposition~5.2]{Sch22}}]%
\label{thm:residual}%
\label{thm:r(r)}
  Let $A = M_{k} - N_{k}$ define a commuting nonstationary splitting method~\eqref{eq:normalform}.
  Then, the residual $r^k := Ax^k - b$ adheres to
  \begin{equation*}
    r^{k+1} = G_{k} r^k
  \end{equation*}
  where $G_{k} := M_{k}^{-1} N_{k}$ denotes the iteration matrix.
\end{proposition}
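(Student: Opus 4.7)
The plan is to mimic the standard derivation of the error recursion \eqref{eq:adi:error-recursion}, but applied directly to the residual rather than to $x^k - x^*$, and then to use commutativity at the very end to rewrite the resulting iteration operator into the canonical form $G_k = M_k^{-1} N_k$.

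Concretely, I would start from the second normal form~\eqref{eq:normalform:2}, which already exposes the residual on its right-hand side: $x^{k+1} = x^k - M_k^{-1}(Ax^k - b) = x^k - M_k^{-1} r^k$. Applying $A$ to both sides and subtracting $b$ gives
\begin{equation*}
  r^{k+1} = A x^{k+1} - b = (A x^k - b) - A M_k^{-1} r^k = r^k - A M_k^{-1} r^k.
\end{equation*}
Now I would substitute the splitting identity $A = M_k - N_k$ into the factor $A M_k^{-1}$, obtaining $A M_k^{-1} = I - N_k M_k^{-1}$, so that
\begin{equation*}
  r^{k+1} = r^k - (I - N_k M_k^{-1}) r^k = N_k M_k^{-1} r^k.
\end{equation*}

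The final, and only nontrivial, step is to convert $N_k M_k^{-1}$ into $M_k^{-1} N_k$. This is where the commutativity hypothesis~\eqref{eq:commute} enters: from $M_k N_k = N_k M_k$, multiplying on the left and right by $M_k^{-1}$ yields $N_k M_k^{-1} = M_k^{-1} N_k = G_k$. Substituting this identity into the previous display closes the proof.

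The only place the argument could fail is precisely this last rewriting; without commutativity one would merely obtain $r^{k+1} = (N_k M_k^{-1}) r^k$, which governs a similar but distinct operator (conjugate to $G_k$ via $M_k$, and hence isospectral, but not equal). Thus the commuting-split assumption is doing essential work, and this is the step I would emphasize. Everything else is a direct algebraic manipulation using only the consistency of the splitting and the second normal form, so no sharper tools are needed.
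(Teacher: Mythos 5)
Your proof is correct and rests on the same key fact as the paper's, namely that $M_k N_k = N_k M_k$ implies $N_k M_k^{-1} = M_k^{-1} N_k$. The difference is organizational: the paper starts from the first normal form~\eqref{eq:normalform:1} and substitutes $x^{k+1} = M_k^{-1}(N_k x^k + b)$ into $Ax^{k+1}-b$, which forces it to invoke commutativity twice (once to rewrite $A M_k^{-1} N_k x^k$ as $M_k^{-1} N_k A x^k$, and once more to verify that the residual $b$-terms $(M_k^{-1} N_k - N_k M_k^{-1})b$ vanish). You instead begin from the second normal form~\eqref{eq:normalform:2}, which already isolates $r^k$ on the right-hand side; as a result there is no $b$ to cancel, the consistency identity $A M_k^{-1} = I - N_k M_k^{-1}$ does all the algebraic work, and commutativity is needed exactly once, at the final conversion $N_k M_k^{-1} \mapsto M_k^{-1} N_k$. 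This is a mild but genuine streamlining, and your remark that without commutativity one would be left with the $M_k$-conjugate of $G_k$ (isospectral but not equal) is a nice way of pinpointing where the hypothesis bites.
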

\begin{proof}
  Due to $A = M_{k} - N_{k}$ it is $A M_{k}^{-1} = I - N_{k} M_{k}^{-1}$.
  Multiplying~\eqref{eq:commute} with \(M_{k}^{-1}\) from both the
  left and the right, we find that \(M_k^{-1} N_k = N_k M_k^{-1}\).
  Thus, since \(M_{k}\) and \(N_{k}\) commute, so do $A$, $M_{k}^{-1}$, and $N_{k}$.
  Hence, by substituting the first normal form~\eqref{eq:normalform:1}
  into the definition of the residual,
  we obtain
  \begin{align*}
    r^{k+1}
    &= Ax^{k+1} - b \\
    &= A \big( M_k^{-1} (N_k x^k + b) \big) - b \\
    &= M_k^{-1} N_k \underbrace{
      \vphantom{M_k^{-1}}
      Ax^k
    }_{
      \vphantom{M_k^{-1}}
      r^k + b
    }
    + \underbrace{
      A M_k^{-1}
    }_{
      I\mathrlap{{} - N_k M_k^{-1}}
    } b - b \\
    &= M_k^{-1} N_k r^k + \underbrace{
      (M_k^{-1} N_k - N_k M_k^{-1})
    }_0 b
    .
    \myqedhere
  \end{align*}
\end{proof}

This observation allows us to formulate a variant of the second normal form~\eqref{eq:normalform:2}
that iterates the residual~$r^k$ and \emph{increment}~\eqIncFromRes{}
alongside the solution~$x^k$.
Although \autoref{thm:residual} may be seen as
a special case of Hackbusch~\cite[Exercise~2.15]{Hac16},
it allows us to give much simpler proofs of the known properties of the
\ac{ADI} method applied to \Lyapunov{} equations.
The details are covered in \autoref{sec:lyapunov}.

Using \autoref{thm:residual} together with commutation of \(M_{k}^{-1}\) and
\(N_{k}\), observed in its proof, as well as the definition of the increment~\eqIncFromRes, we conclude
the following corollary.
\begin{corollary}%
\label{thm:r(v)}
  The residual~$r^{k+1}$ only depends on
  the increment~$v^k$ leading to the iterate~$x^{k+1}$,
  that is
  \begin{equation*}
    \eqResFromInc
    .
  \end{equation*}
\end{corollary}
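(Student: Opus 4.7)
The plan is to chain together three facts already on the table: the residual recursion from \autoref{thm:residual}, the commutation $M_k^{-1}N_k = N_k M_k^{-1}$ isolated in the proof of that proposition, and the definition of the increment extracted from the second normal form~\eqref{eq:normalform:2}. Since \autoref{thm:residual} gives $r^{k+1} = M_k^{-1}N_k r^k$ and the second normal form yields $v^k = x^{k+1}-x^k = -M_k^{-1}r^k$, it should suffice to move $M_k^{-1}$ past $N_k$ so that the factor $M_k^{-1}r^k$ appears on the right, and then substitute $-v^k$ in its place.

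Concretely, I would write
\[
  r^{k+1} \stackrel{\autoref{thm:residual}}{=} M_k^{-1} N_k r^k
  = N_k\, M_k^{-1} r^k
  = -N_k v^k,
\]
where the middle equality is the commutation of $M_k^{-1}$ and $N_k$ established inside the proof of \autoref{thm:residual}, and the last equality is the definition $v^k = -M_k^{-1}r^k$ recorded in~\eqIncFromRes{}. Thus $r^{k+1}$ is expressed solely in terms of $v^k$ (and the operator $N_k$ associated with the $k$-th step), as claimed by~\eqResFromInc.

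There is essentially no obstacle here: the content of the statement is packaged inside \autoref{thm:residual} and its proof, and the corollary is a one-line manipulation. The only thing to be careful about is matching the sign convention fixed by the macros \eqIncFromRes{} and \eqResFromInc{}; since the paper uses $v^k := -M_k^{-1}r^k$ (so that $x^{k+1} = x^k + v^k$ agrees with~\eqref{eq:normalform:2}), the resulting identity carries the minus sign $r^{k+1} = -N_k v^k$. If the macro instead encodes $v^k := M_k^{-1}r^k$, the formula becomes $r^{k+1} = N_k v^k$; either way, the derivation is the same three-line commutation argument.
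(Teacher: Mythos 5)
Your derivation is exactly the argument the paper intends: it chains \autoref{thm:residual}, the commutation $M_k^{-1}N_k = N_k M_k^{-1}$ established in that proposition's proof, and the increment definition $v^k = -M_k^{-1}r^k$ from the second normal form to obtain $r^{k+1} = -N_k v^k$. The sign caveat you note is correct, and the paper's convention indeed matches $v^k := -M_k^{-1}r^k$, so the result carries the minus sign.
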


Combining this with the increment formula once more, gives:
\begin{corollary}%
\label{thm:v(v)}
  The increment~$v^k$ only depends on the previous increment~$v^{k-1}$,
  that is
  \begin{equation*}
    \eqIncFromInc
    .
  \end{equation*}
\end{corollary}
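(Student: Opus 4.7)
My plan is to obtain the recursion for $v^k$ by composing two ingredients that are already available: the defining relation $v^k = -M_k^{-1} r^k$, which is read off from the second normal form~\eqref{eq:normalform:2} and is exactly the increment formula referenced in \autoref{thm:r(v)}; and \autoref{thm:r(v)} itself, which expresses the current residual as a linear image of the preceding increment.

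The first step is to apply the increment formula at step~$k$ so as to write $v^k$ purely in terms of $r^k$, picking up a factor $-M_k^{-1}$. The second step is to invoke \autoref{thm:r(v)} with the index shifted down by one, obtaining $r^k$ as a linear function of $v^{k-1}$ whose only operator coefficient is $N_{k-1}$ (up to sign). Substituting the second expression into the first then produces $v^k$ as $M_k^{-1} N_{k-1}$ applied to $v^{k-1}$, which is the desired dependency. No new commutativity argument is needed in this combination, because the cancellation of the inner pair $M_{k-1}^{-1}$ and $M_{k-1}$ that would otherwise appear has already been absorbed into \autoref{thm:r(v)}.

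The one mild obstacle is the bookkeeping of indices: \autoref{thm:r(v)} is stated as a relation between $r^{k+1}$ and $v^k$, so the index shift requires $k \geq 1$, while the base increment~$v^0$ is produced directly from the initial data by the normal form~\eqref{eq:normalform} rather than from a prior increment. Aside from this caveat the proof reduces to a one-line substitution chaining the increment formula into \autoref{thm:r(v)}.
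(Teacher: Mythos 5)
Your proposal is correct and follows the same route the paper takes: the paper simply states ``Combining this with the increment formula once more, gives:'' before the corollary, which is exactly your chain $v^k = -M_k^{-1} r^k$ followed by the index-shifted \autoref{thm:r(v)} to replace $r^k$ by $-N_{k-1}v^{k-1}$. Your remark about the commutativity already being absorbed into \autoref{thm:r(v)} and the caveat about $v^0$ being a base case are both accurate and helpful bookkeeping.
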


It may seem that \autoref{thm:r(v)} should always give the most efficient way
to update the residual.
However, \autoref{rem:lyapunov:complexity} provides a counter example.
At times,
\autoref{thm:r(r)} in combination with \eqIncFromRes{} yields a formula for
the residual~$r^{k+1}$ that is more efficient to evaluate.
In summary, this yields \autoref{alg:css}.

\begin{algorithm}[t]
  \caption{Commuting Splitting Scheme}\label{alg:css}
  \KwIn{%
    operator split $A=M_{k}-N_{k}$ with $M_{k}N_{k} = N_{k}M_{k}$ for $k\in\N$,
    initial guess $x^0$
  }
  \KwOut{%
    $v^0, v^1, \ldots$
    such that $x \approx x^0 + v^0 + v^1 + \ldots$ solves $Ax=b$
  }
  Assemble initial residual
  $r^0 \gets Ax^0 - b$\;
  \For{$k\in \{0, 1, \ldots\}$}{%
    Compute increment
    $v^k \gets - M_{k}^{-1} r^k$\;
    Update residual
    $r^{k+1} \gets -N_{k} v^k$
    (alternatively, via $r^{k+1} \gets M_{k}^{-1} N_{k} r^k$)
    \;
    \lIf{converged}{break}
  }
  Assemble solution
  $x \gets x^0 + v^0 + v^1 + \ldots$
  if needed\;
\end{algorithm}

\subsection{Fully Commuting Splitting Schemes}
We call an operator split $A = M_{k} - N_{k}$
and the corresponding iterative method~\eqref{eq:normalform}
\emph{fully commuting} if \(\forall i,j \in\N\) we have
\begin{equation}%
  \label{eq:fully commuting}
  M_i N_j = N_j M_i,
  \qquad
  M_i M_j = M_j M_i,
  \text{ and }\quad
  N_i N_j = N_j N_i.
\end{equation}
In the latter case,
the order in which the steps defined by $(M_0,N_0)$ to $(M_{k},N_{k})$ are applied does not matter:

\begin{proposition}[{Permutation Invariance~\cite[Proposition~5.1]{Sch22}}]%
\label{thm:permutation}
  Let $A = M_{k} - N_{k}$ be a fully commuting operator split.
  Let the initial iterate~$x^0$ be fixed.
  Then, the value of $x^{k+1}$ given by the nonstationary splitting method~\eqref{eq:normalform}
  does not depend on the order in which the steps are executed.
\end{proposition}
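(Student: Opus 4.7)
The plan is to reduce permutation invariance to the pairwise commutation of the one-step iteration maps. Let $\phi_j(x) := M_j^{-1}(N_j x + b) = x - M_j^{-1}(Ax - b)$ denote the affine one-step map associated with step $j$, so that $x^{k+1} = (\phi_k \circ \phi_{k-1} \circ \cdots \circ \phi_0)(x^0)$. Permuting the order of the steps amounts to permuting the factors of this composition, and such a composition is invariant under arbitrary reorderings whenever the factors commute pairwise. It therefore suffices to show that $\phi_i \circ \phi_j = \phi_j \circ \phi_i$ for every pair of indices $i, j \in \{0,\ldots,k\}$.

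A key auxiliary observation that I would establish first is that $A$ commutes with each $M_j$ (and hence with each $M_j^{-1}$). This is not part of full commutativity~\eqref{eq:fully commuting} directly, but follows from it together with the splitting identity $A = M_j - N_j$: one computes $A M_j = M_j^2 - N_j M_j = M_j^2 - M_j N_j = M_j A$, where the middle equality uses $M_j N_j = N_j M_j$.

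With this in hand, expanding $(\phi_i \circ \phi_j)(x)$ yields
\[
  (\phi_i \circ \phi_j)(x) = x - (M_i^{-1} + M_j^{-1})(Ax-b) + M_i^{-1} A M_j^{-1}(Ax-b),
\]
and interchanging $i \leftrightarrow j$ gives the analogous formula for $(\phi_j \circ \phi_i)(x)$. Their difference simplifies to $\bigl(M_i^{-1} A M_j^{-1} - M_j^{-1} A M_i^{-1}\bigr)(Ax-b)$, which vanishes because $A$ commutes with both $M_i^{-1}$ and $M_j^{-1}$, while $M_i^{-1}$ and $M_j^{-1}$ commute with each other by full commutativity.

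The main subtlety is precisely the commutation of $A$ with the $M_j$'s: without it, the constant-term contributions $M_i^{-1} A M_j^{-1} b$ and $M_j^{-1} A M_i^{-1} b$ would not cancel, and the argument would fail. Everything else is a straightforward expansion of the affine iteration, and the reduction to pairwise commutation handles the combinatorics of arbitrary permutations for free; no induction on the permutation is needed.
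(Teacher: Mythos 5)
Your proof is correct, but it takes a genuinely different route from the paper's. The paper subtracts the exact solution $x^*$ to pass to error coordinates: since $e^{k+1} = G_k\cdots G_0\, e^0$, the problem reduces to the \emph{linear} fact that the iteration matrices $G_i = M_i^{-1}N_i$ commute pairwise (which follows directly from~\eqref{eq:fully commuting}), and then $x^{k+1} = e^{k+1} + x^*$. You instead keep the affine structure: you show that the one-step maps $\phi_j(x) = G_j x + M_j^{-1}b$ commute as \emph{affine} maps, which requires not only $G_iG_j = G_jG_i$ but also that the inhomogeneous terms rearrange consistently. The extra ingredient you isolate — that $A$ commutes with each $M_j$, derived from $A = M_j - N_j$ together with $M_jN_j = N_jM_j$ — is exactly what makes the constant terms cancel, and it is a correct and worthwhile observation not made explicit in the paper. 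A modest advantage of your version is that it never invokes the solution $x^*$: the paper's proof tacitly assumes $Ax=b$ has a (unique) solution, which is not stated among the hypotheses of the proposition, whereas your argument needs only the invertibility of the $M_j$'s, which is assumed throughout. The trade-off is that the paper's error-coordinate trick is shorter and avoids the expansion of the affine composition; your argument is slightly longer but more self-contained. Both are valid proofs.
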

\begin{proof}
  Let $x^*$ denote the unique solution to $Ax=b$.
  Observe that $\forall i,j\in\N$,
  all iteration matrices $G_i := M_{i}^{-1} N_{i}$ and $G_j$ commute.
  Therefore, after applying $G_0$ through $G_k$,
  the error
  \begin{math}
    e^{k+1} = G_{k} G_{k-1} \cdots G_0 e^0
  \end{math}
  does not depend on the order of the steps,
  as does the iterate~$x^{k+1} = e^{k+1} + x^*$.
\end{proof}

\autoref{thm:permutation} generalizes \citeauthorLiW~\cite[Theorem~4.1]{LiW02}.

\section{ADI Method}%
\label{sec:adi}

The \ac{ADI} method has originally been described by \citeauthorPeaceman~\cite{PeaR55}.
Suppose \chreplaced{$A=\Ai+\Aii$}{$A=H+V$} for some linear operators $\Ai$ and $\Aii$
and the system to be solved is, again, $Ax=b$ for given $b$.
Select some parameters $\alpha_{k}, \beta_{k} \in\C$.
Using the shorthand notation
\begin{equation}%
\label{eq:adi:shorthand}
\begin{aligned}
  \Aip  &:= \Ai + \alpha_{k} I &
  \qquad\qquad\qquad
  \Aiip &:= \Aii + \beta_{k} I \\
  \Aim  &:= \Ai - \beta_{k} I &
  \Aiim &:= \Aii - \alpha_{k} I
  ,
\end{aligned}
\end{equation}
the \ac{ADI} method reads
\begin{equation}
  \begin{aligned}
    \Aip  x^{k+\frac{1}{2}} &= b - \Aiim x^k \\
    \Aiip x^{k+1}           &= b - \Aim x^{k+\frac{1}{2}}
    .
  \end{aligned}
\end{equation}
As a splitting scheme~\eqref{eq:normalform},
the \ac{ADI} method yields the consistent operator split
\begin{equation}\label{eq:ADIsplit}
  \begin{aligned}
    M_{k} &:= {(\alpha_{k} + \beta_{k})}^{-1} \Aip \Aiip \\
    N_{k} &:= {(\alpha_{k} + \beta_{k})}^{-1} \Aim \Aiim
    .
  \end{aligned}
\end{equation}
If $\Ai$ and $\Aii$ commute, \ie $\Ai\Aii=\Aii\Ai$,
so do all the shorthands~\eqref{eq:adi:shorthand} as well as the split
operators~\eqref{eq:ADIsplit}.
We summarize these findings in the following theorem.

\begin{theorem}[Fully Commuting Splitting Scheme]%
\label{thm:adi}
  The \ac{ADI} method is a fully commuting splitting scheme~\eqref{eq:fully commuting}
  assuming that $\Ai$ and $\Aii$ commute.
\end{theorem}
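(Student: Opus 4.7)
The plan is to reduce all three commutation identities required by \eqref{eq:fully commuting} to the single hypothesis $\Ai \Aii = \Aii \Ai$, by observing that every operator appearing in the ADI split~\eqref{eq:ADIsplit} lies in the commutative subalgebra generated by $\Ai$, $\Aii$, and the identity.

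First, I would establish an auxiliary observation: under the hypothesis $\Ai \Aii = \Aii \Ai$, the four shorthand operators $\Aip$, $\Aiip$, $\Aim$, $\Aiim$ introduced in~\eqref{eq:adi:shorthand} pairwise commute for all index choices. This is immediate, since each is of the form $\Ai + cI$ or $\Aii + dI$ for some scalar $c,d\in\C$, the identity commutes with everything, and any two polynomials in a pair of commuting operators commute. I would state this observation explicitly as the technical core of the argument.

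Next, I would write out $M_i M_j$, $N_i N_j$, and $M_i N_j$ using their definitions in~\eqref{eq:ADIsplit}. Each product is a scalar multiple of a product of four shorthand operators drawn from the list above. Since scalars are central and the shorthand operators pairwise commute by the previous step, each product can be reordered freely, which yields the matching product with $i$ and $j$ swapped. This gives the three required identities $M_i M_j = M_j M_i$, $N_i N_j = N_j N_i$, and $M_i N_j = N_j M_i$ for all $i,j\in\N$.

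I do not foresee any real obstacle here; the argument is essentially the statement that a polynomial algebra in two commuting generators is itself commutative. The only points that deserve a brief sentence are that the scalar prefactors $(\alpha_k+\beta_k)^{-1}$ are central and therefore harmless, and that consistency $M_k - N_k = A$ was already verified when the split was introduced and need not be revisited. Consequently the theorem follows by directly applying Definition~\eqref{eq:fully commuting}.
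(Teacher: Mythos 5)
Your proposal is correct and matches the paper's argument, which the authors state informally in the sentence immediately preceding the theorem: once $\Ai$ and $\Aii$ commute, all the shorthands~\eqref{eq:adi:shorthand} and hence the split operators~\eqref{eq:ADIsplit} commute, being scalar multiples of products of operators of the form $\Ai+cI$ or $\Aii+dI$. You have simply filled in the details the paper leaves implicit.
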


As we will motivate in the next section (\autoref{rem:lyapunov:eigenvalues})
the parameters may occur in conjugated pairs.
That is, $\alpha_{k+1} = \conj{\alpha_{k}}$ and $\beta_{k+1} = \conj{\beta_{k}}$.
It is therefore reasonable to look for a more efficient means to handle both steps $k$ and $k+1$ at once,
especially as the \ac{ADI} method preserves real iterates.

\begin{theorem}[Real-valued Double-Step]%
\label{thm:adi:2step}
  Let $\Ai$ and $\Aii$ be commuting \emph{real operators},
  \ie they map real elements onto real ones.
  Suppose that iterate~$x^k$ and residual~$r^k$ are real for some fixed $k\in\N$,
  and that the next \ac{ADI} parameters fulfill
  \begin{equation*}
    \alpha_{k} \alpha_{k+1},
    \quad
    \alpha_{k} + \alpha_{k+1},
    \quad
    \beta_{k} \beta_{k+1},
    \quad
    \beta_{k} + \beta_{k+1} \in\R,
  \end{equation*}
  for example, $\alpha_{k+1} = \conj{\alpha_{k}}$ and $\beta_{k}, \beta_{k+1} \in\R$.
  Then, two \ac{ADI} steps later, the iterate~$x^{k+2}$ and residual~$r^{k+2}$  are real again.
\end{theorem}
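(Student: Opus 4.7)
My plan is to collapse two consecutive \ac{ADI} steps into a single real-valued \enquote{double-step}. The first ingredient is the full commutation of \autoref{thm:adi}: since $\Ai$ and $\Aii$ commute, the scalar-normalised factors in $M_k, M_{k+1}, N_k, N_{k+1}$ can be reordered and grouped into pure $\Ai$- and pure $\Aii$-polynomials. Writing $c := (\alpha_k+\beta_k)(\alpha_{k+1}+\beta_{k+1})$, this yields
\begin{equation*}
  M_k M_{k+1} = \tfrac{1}{c}\, q_\alpha(\Ai)\, q_\beta(\Aii),
  \qquad
  N_k N_{k+1} = \tfrac{1}{c}\, p_\beta(\Ai)\, p_\alpha(\Aii),
\end{equation*}
with the monic quadratics $q_\alpha(t) = (t+\alpha_k)(t+\alpha_{k+1})$, $q_\beta(t) = (t+\beta_k)(t+\beta_{k+1})$, $p_\beta(t) = (t-\beta_k)(t-\beta_{k+1})$, and $p_\alpha(t) = (t-\alpha_k)(t-\alpha_{k+1})$. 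Under the four hypotheses on the parameters each of these polynomials has real coefficients, so each evaluation is a real operator.

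For the residual, I apply \autoref{thm:residual} twice to obtain $r^{k+2} = (M_k M_{k+1})^{-1}(N_k N_{k+1})\, r^k$. The scalar prefactors $\tfrac{1}{c}$ cancel in the composition, so that $r^{k+2} = q_\alpha(\Ai)^{-1}\, q_\beta(\Aii)^{-1}\, p_\beta(\Ai)\, p_\alpha(\Aii)\, r^k$ is a real rational expression in $\Ai$ and $\Aii$ applied to the real input $r^k$. Hence $r^{k+2}$ is real.

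For the iterate, I apply the second normal form~\eqref{eq:normalform:2} twice and collect using commutation, obtaining
\begin{equation*}
  x^{k+2} - x^k = -(M_k M_{k+1})^{-1}\bigl(M_{k+1} + N_k\bigr)\, r^k.
\end{equation*}
Since $(M_k M_{k+1})^{-1} = c\,[q_\alpha(\Ai)\, q_\beta(\Aii)]^{-1}$ by the first display, it suffices to show that $c\,(M_{k+1}+N_k)$ is a real operator; the scalar $c$ then cancels. Note that $M_{k+1}$ and $N_k$ individually carry the non-real prefactors $(\alpha_{k+1}+\beta_{k+1})^{-1}$ and $(\alpha_k+\beta_k)^{-1}$, so reality is not automatic.

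The main obstacle is the verification of reality for $c\,(M_{k+1}+N_k)$. Expanding the product and collecting the coefficients of $\Ai\Aii$, $\Ai$, $\Aii$, and $I$, mixed cross terms like $\alpha_k\beta_{k+1}$ and $\alpha_{k+1}\beta_k$ initially appear and are \emph{not} controlled by the four hypotheses. The crux is that these cross terms pairwise cancel: the coefficient of $\Ai$ collapses to $\beta_k\beta_{k+1}-\alpha_k\alpha_{k+1}$, that of $\Aii$ to its negative, the coefficient of $\Ai\Aii$ to $(\alpha_k+\alpha_{k+1})+(\beta_k+\beta_{k+1})$, and that of $I$ to $\alpha_k\alpha_{k+1}(\beta_k+\beta_{k+1}) + \beta_k\beta_{k+1}(\alpha_k+\alpha_{k+1})$, all four real by assumption. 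This establishes reality of $x^{k+2}-x^k$ and concludes the proof.
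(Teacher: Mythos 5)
Your proof is correct and follows essentially the same route as the paper's: both establish that the scalar $c = \sigma_{k+1,k}$ cancels the complex prefactors, verify that $c\,M_kM_{k+1}$, $c\,N_kN_{k+1}$, and $c\,(M_{k+1}+N_k)$ are real operators via the same coefficient-collection computation, and use the identity $x^{k+2}-x^k = -(M_kM_{k+1})^{-1}(M_{k+1}+N_k)\,r^k$ derived from the increment recursion. Your packaging of the operator products into the monic quadratics $q_\alpha(\Ai)\,q_\beta(\Aii)$ and $p_\beta(\Ai)\,p_\alpha(\Aii)$ is a nicely explicit way to see the reality, but the underlying argument is the one the paper gives.
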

\begin{proof}
 We show constructively how to compute iterate~$x^{k+2}$ and residual~$r^{k+2}$ using real arithmetic.
  The residual~$r^{k+2} = G_{k+1} G_{k} r^k$ is real if
  the operator~$G_{k+1} G_{k}$ of the combined step is real.
  Let the scalar
  \begin{equation}
    \sigma_{k+1,k} :=
    (\alpha_{k+1} + \beta_{k+1})
    (\alpha_{k} + \beta_{k})
    \in\C\setminus\{0\}.
  \end{equation}
  Recall that $\Ai_k^+$ and $\Aii_{k+1}^+$ commute.
  We observe that the scaled
  \begin{equation}
    \sigma_{k+1,k}
    M_{k+1} M_{k}
    =
    \underbrace{
      (\Ai_{k+1}^+ \Ai_{k}^+)
    }_{\mathrlap{
      \hspace{-2ex}
      \begin{aligned}
        &= (\Ai+\alpha_{k+1}I) (\Ai+\alpha_{k} I) \\
        &= \Ai^2 + (\alpha_{k} + \alpha_{k+1}) \Ai + \alpha_{k} \alpha_{k+1} I
      \end{aligned}
    }}
    (\Aii_{k+1}^+ \Aii_{k}^+)
    ,
  \end{equation}
  is a real operator,
  as is the scaled $\sigma_{k+1,k} N_{k+1} N_{k}$ and, consequently, $G_{k+1} G_{k}$.
  This proves the desired property of the residual~$r^{k+2}$.

  Moreover,
  \autoref{thm:v(v)} together with the definition of the increment~\eqIncFromRes{} implies that
  \begin{math}
    M_{k+1} v^{k+1} = N_{k} v^k = -N_{k} M_{k}^{-1} r^k = -M_{k}^{-1} N_{k} r^k,
  \end{math}
  as $N_{k}$ and $M_{k}$ commute.
  Therefore,
  by multiplying $v^k$ and $v^{k+1}$ with $M_{k+1}M_{k}$,
  we conclude that
  the increment of the combined step
  \begin{math}
    x^{k+2} = x^k + v^k + v^{k+1}
  \end{math}
  is given by
  \begin{equation}%
    \label{eq:double-step}
    M_{k+1} M_{k}
    (v^k + v^{k+1})
    = - (M_{k+1} + N_{k}) r^k
    .
  \end{equation}
  Observe that the scaled
  \begin{subequations}
    \begin{align}
      \sigma_{k+1,k}
      (M_{k+1} + N_k)
      &=
      \begin{multlined}[t]
        (\alpha_k + \beta_k) (\Ai+\alpha_{k+1}I) (\Aii+\beta_{k+1}I) \\
        + (\alpha_{k+1} + \beta_{k+1}) (\Ai-\beta_{k}I) (\Aii-\alpha_{k}I)
      \end{multlined}
      \\
      &=
      \begin{multlined}[t]
        (\alpha_{k} + \alpha_{k+1} + \beta_{k} + \beta_{k+1}) \Ai\Aii \\
        + (\alpha_{k}\beta_{k+1} + \beta_{k}\beta_{k+1} - \alpha_{k}\alpha_{k+1} - \alpha_{k} \beta_{k+1}) \Ai \\
        + (\alpha_{k}\alpha_{k+1} + \alpha_{k+1}\beta_{k} - \alpha_{k+1}\beta_{k} - \beta_{k}\beta_{k+1}) \Aii \\
        + \big( (\alpha_{k} + \alpha_{k+1})\beta_{k}\beta_{k+1} + (\beta_{k} + \beta_{k+1})\alpha_{k}\alpha_{k+1} \big) I
      \end{multlined}
      \\
      &=
      \begin{multlined}[t]
        (\alpha_{k} + \alpha_{k+1} + \beta_{k} + \beta_{k+1}) \Ai\Aii
        + (\beta_{k}\beta_{k+1} - \alpha_{k}\alpha_{k+1}) (\Ai-\Aii) \\
        + \big( (\alpha_{k} + \alpha_{k+1})\beta_{k}\beta_{k+1} + \alpha_{k}\alpha_{k+1}(\beta_{k} + \beta_{k+1}) \big) I
      \end{multlined}
    \end{align}
  \end{subequations}
  is a real operator as well.
  Therefore,
  given that the residual~$r^k$ is real,
  equation~\eqref{eq:double-step} can be solved for the combined increment~$v^k + v^{k+1}$ using only real arithmetic.
  This proves the desired property for the iterate~$x^{k+2}$.
\end{proof}

\autoref{thm:adi:2step} generalizes~\cite{BenKS13}.
It motivates to look for an efficient formulation that handles complex shift parameters
when applying the \ac{ADI} method to \emph{any} linear system,
in particular, \emph{any} linear matrix equation.

\section{Low-rank \Lyapunov\ ADI}%
\label{sec:lyapunov}

In this section, we apply the \ac{ADI} method to an \ac{ALE} with low-rank right-hand side and solution.
We show how to derive several low-rank variants of the \ac{ADI} method~\cite{LiW02,BenLP08,BenLT09,BenK14,LanMS15},
while extending all these algorithms to non-zero initial values~$x^0 \neq 0$.
Throughout the section, we use the SVD-type low-rank factorization of,
\eg~\cite{BenLT09},
for the iterates~$x^k$, increments~$v^k$, residuals~$r^k$, and right-hand sides~$b$.

Consider the \ac{ALE}
\begin{equation}%
\label{eq:lyapunov}
  \Lyap(X) :=
  GSG^\HT + AX + XA^\HT = 0
\end{equation}
for square matrices $A, X\in\C^{n\times n}$, $G\in\C^{n\times g}$, and $S\in\C^{g\times g}$.
We assume that $\lambda_i + \conj{\lambda_j} \neq 0$ for any two eigenvalues~$\lambda_i,\lambda_j\in\C$ of $A$;
that is, equation~\eqref{eq:lyapunov} permits a unique solution;
see, \eg~\cite[Corollary~1.1.4]{AboFIetal03}
\chadded{for a system-theoretical perspective, or \cite{HorJ91} for a linear algebraic perspective.}
We are aiming to approximate the solution as
$X\approx \ZYZ^{\Htran}$, where $Z\in\C^{n\times z}$, $Y\in\C^{z\times z}$.
If the constant term has a low rank $g\ll n$,
we can expect the solution to have a low numerical rank as well~\cite{Pen00,AntSZ02,Gra04,Sab07,TruV07}.

In terms of the previous section,
$\Ai(U) := AU$ and $\Aii(U) := UA^{\Htran}$,
which obviously commute,
$(\Ai\Aii)(U) = AUA^{\Htran} = (\Aii\Ai)(U)$.
For generalized equations, the situation is not as simple.

\begin{remark}[Generalized Matrix Equations]%
\label{rem:generalized eqs}
  For a generalized \Lyapunov{} equation
  \begin{equation*}
    AXE^\TT + EXA^\TT = -W
  \end{equation*}
  we can, in general, not expect~$A$ and~$E$ to commute.
  For non-singular~$E$, however,
  it is still possible to symbolically transform the above equation
  into its simpler equivalent~\eqref{eq:lyapunov},
  in which case the restriction of~\autoref{thm:adi} becomes trivial.
  Then, however, it is mandatory to rephrase the actual steps of the \ac{ADI}
  individually along the lines of Saak~\cite[Section~5.2]{Saa09}, to avoid inversion of~$E$.
\end{remark}

For this reason,
we continue deriving the low-rank \ac{ADI} for standard equations~\eqref{eq:lyapunov}.
The overall \ac{ADI} operator split reads
\begin{equation}%
\label{eq:lyapunov:opsplit}
  \begin{aligned}
    M_{k}(U) &= {(\alpha_{k} + \beta_{k})}^{-1} (A + \alpha_{k} I) U {(A + \conj{\beta_{k}}  I)}^{\Htran},\\
    N_{k}(U) &= {(\alpha_{k} + \beta_{k})}^{-1} (A - \beta_{k}  I) U {(A - \conj{\alpha_{k}} I)}^{\Htran},
  \end{aligned}
\end{equation}
which is symmetry preserving if we choose $\beta_{k} := \conj{\alpha_{k}}$.
We further require that $-\alpha_{k}$ is not an eigenvalue of~$A$,
such that $M_{k}$ is invertible.
As long as the initial guess~$X_0 = Z_0 Y_0 Z_0^\HT$ is symmetric,
the same will hold for the residuals and increments.
The residual of a low-rank factorization can itself be expressed as a low-rank factorization,
\begin{math}
  \Lyap(\ZYZ^\HT) = \RTR^\HT
  ,
\end{math}
where
\begin{equation}%
\label{eq:lyapunov:residual:structure}
\begin{alignedat}{2}
  R &= \begin{bmatrix}
    G & Z & AZ
  \end{bmatrix}
  &&\in\C^{n\times(g+2z)}
  \\
  T &= \begin{bmatrix}
    S & \cdot & \cdot \\
    \cdot & \cdot & Y \\
    \cdot & Y & \cdot
  \end{bmatrix}
  &&\in\C^{(g+2z)\times(g+2z)}
\end{alignedat}
\end{equation}
have rank at most $g+2z\ll n$.
In particular, $\Lyap(0) = GSG^\HT$ is a rank-$g$ factorization, given both
\(G\) and \(S\) have full rank.

\begin{lemma}[Consistent \Lyapunov{} Residual]%
\label{thm:lyapunov:residual}
  There exists a factorization of the \Lyapunov{} \ac{ADI} residual~$r^k = R_{k} T_{} R_{k}^\HT$
  that is consistent with the initial residual~$r^0 = R_0 T_{} R_0^\HT$.
  That is, the inner factor $T$ does not depend on the iteration~$k\in\N$.
\end{lemma}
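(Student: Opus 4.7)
The plan is to induct on~$k$, using the residual recursion of \autoref{thm:residual} together with the explicit form of the \Lyapunov{} \ac{ADI} operator split~\eqref{eq:lyapunov:opsplit}. The base case is already provided: evaluating~\eqref{eq:lyapunov:residual:structure} at the initial iterate $X_0 = Z_0 Y_0 Z_0^\HT$ yields $R_0 = \begin{bmatrix} G & Z_0 & AZ_0 \end{bmatrix}$ together with an inner factor $T$ built only from $S$ and $Y_0$. Since $T$ depends on no iteration-dependent quantity, the lemma reduces to showing that one \ac{ADI} step maps any factorization $r^k = R_k T R_k^\HT$ to $r^{k+1} = R_{k+1} T R_{k+1}^\HT$ with the \emph{same} inner matrix $T$ and some updated outer factor~$R_{k+1}$.

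For the induction step, I would use $r^{k+1} = M_k^{-1} N_k\, r^k$ from \autoref{thm:residual} and unfold the Sylvester-type actions of $M_k$ and $N_k$ on matrices according to~\eqref{eq:lyapunov:opsplit}. All factors $A \pm \alpha_k I$ and $A \pm \beta_k I$ together with their inverses are rational functions of the single operator~$A$, so they pairwise commute; applying $(PQ)^\HT = Q^\HT P^\HT$ to the right-hand factors then yields
\begin{equation*}
  G_k(U) \,=\, C_k\, U\, D_k^\HT,
  \quad\text{with}\quad
  C_k := (A+\alpha_k I)^{-1}(A-\beta_k I),
  \quad
  D_k := (A+\conj{\beta_k} I)^{-1}(A-\conj{\alpha_k} I).
\end{equation*}
The symmetry-preserving shift choice $\beta_k = \conj{\alpha_k}$ adopted earlier in this section forces $D_k = C_k$, so that $G_k(U) = C_k U C_k^\HT$. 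Substituting $r^k = R_k T R_k^\HT$ then gives $r^{k+1} = (C_k R_k)\, T\, (C_k R_k)^\HT$, and setting $R_{k+1} := C_k R_k$ closes the induction with $T$ unchanged.

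The only slightly delicate step is the bookkeeping of conjugation and Hermitian transposition needed to arrive at the matrix form of $G_k$ and, in particular, to identify $D_k$ with $C_k$; once this small calculation is settled, the rest is a one-line substitution. I would also note in passing that, because $R_{k+1}$ inherits the column count of $R_0$, the resulting $T$ is literally the same matrix in every iteration, not merely a matrix whose block pattern is preserved.
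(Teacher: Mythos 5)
Your proposal is correct and takes essentially the same route as the paper: you invoke the residual recursion (\autoref{thm:residual}), substitute the \Lyapunov{} operator split~\eqref{eq:lyapunov:opsplit} with $\beta_k = \conj{\alpha_k}$, and arrive at the same outer-factor update $R_{k+1} = (A+\alpha_k I)^{-1}(A-\conj{\alpha_k}I)R_k$ with $T$ unchanged. The only cosmetic difference is that you unfold $G_k = M_k^{-1}N_k$ into a sandwich map $C_k\,(\cdot)\,C_k^\HT$, whereas the paper leaves the recursion in the form $M_k(r^{k+1}) = N_k(r^k)$ and matches outer factors; the two are algebraically identical.
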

\begin{proof}
  Without loss of generality, suppose $r^k = R_{k} T_{k} R_{k}^\HT$ for $k\in\N$.
  Substitute the split operators~\eqref{eq:lyapunov:opsplit} into
  the residual update of \autoref{thm:r(r)}, \eqResFromRes,
  to obtain
  \begin{equation}
    (A + \alpha_{k} I) R_{k+1} T_{k+1} R_{k+1}^\HT {(A + \alpha_{k} I)}^\HT
    =
    (A - \conj{\alpha_{k}} I) R_{k} T_{k} R_{k}^\HT (A - \conj{\alpha_{k}} I)^\HT
    .
  \end{equation}
  By choosing the outer factor according to
  \begin{equation}%
  \label{eq:lyapunov:residual:naive}
    (A + \alpha_{k} I) R_{k+1} = (A - \conj{\alpha_{k}} I) R_{k}
    ,
  \end{equation}
  the inner factor does not need to change;
  that is, $T_{k+1} = T_{k} = T$.
\end{proof}

Moreover, we may utilize the definition of the increment~$v^k$
for a more efficient evaluation of the residual factors.
The formulation $M_{k}(v^k) = -r^k$ directly reveals a factorization $v^k = V_{k} \hat Y_{k} V_{k}^\HT$,
which we will now explain.
Using the \Lyapunov{} operator split~\eqref{eq:lyapunov:opsplit},
\begin{equation}
  {(2\Re\alpha_{k})}^{-1} (A+\alpha_{k} I) V_{k} \hat Y_{k} V_{k}^\HT {(A+\alpha_{k} I)}^\HT
  = - R_{k} T_{} R_{k}^\HT
  ,
\end{equation}
which is fulfilled for
\begin{equation}%
\label{eq:lyapunov:increments}
  (A+\alpha_{k} I) V_{k} = R_{k}
  ,
  \qquad
  \hat Y_{k} = -2\Re(\alpha_{k}) T
  .
\end{equation}
Therefore, by simple algebraic reformulations of the \Lyapunov{} residual formula~\eqref{eq:lyapunov:residual:naive},
we observe
\begin{equation}%
\label{eq:lyapunov:residual}
\begin{aligned}
  R_{k+1}
  &= {(A + \alpha_{k} I)}^{-1} (A - \conj{\alpha_{k}} I) R_{k} \\
  &= R_{k} - 2\Re(\alpha_{k}) {(A + \alpha_{k} I)}^{-1} R_{k} \\
  &= R_{k} - 2\Re(\alpha_{k}) V_{k}.
\end{aligned}
\end{equation}
This relation has previously been derived by
\citeauthorBenKS~\cite{morBenKS13} in the low-rank ADI context as well as
\citeauthorWolP~\cite{WolP16} interpreting the iteration as an implicit, in
general oblique, Krylov subspace projection method.

\begin{remark}[Efficient Residual Update]%
\label{rem:lyapunov:complexity}
  Had we instead used \autoref{thm:r(v)}, \eqResFromInc,
  to derive the residual factors,
  we would have obtained
  \begin{equation}%
  \label{eq:lyapunov:residual:eigen}
    R_{k+1} = (A-\conj{\alpha_{k}} I) V_{k}
    ,
  \end{equation}
  which can at best be evaluated with near-linear complexity if $A$ is sparse.
  In contrast, formula~\eqref{eq:lyapunov:residual}
  can be evaluated with linear complexity irrespective of $A$.
\end{remark}

\begin{remark}[Shifts and Eigenvalues]%
\label{rem:lyapunov:eigenvalues}
  Ideally, the residual~$r^{k+1}=R_{k+1} T_{} R_{k+1}^\HT=0$ should vanish.
  By formula~\eqref{eq:lyapunov:residual:eigen},
  that means $AV_{k} = \conj{\alpha_{k}} V_{k}$,
  which is only possible if $\conj{\alpha_{k}}$ is an eigenvalue of~$A$,
  and $V_{k}$ spans (a subspace of) the corresponding eigenspace.
  Relaxing this to a Ritz-Galerkin-type condition, multiplying with
  \(V_{k}^{\HT}\) from the left, and thus turning the eigenvalues into
  Ritz-values, motivates why both Penzl's heuristic shifts~\cite{Pen00b} and the
  self-generating projection shifts~\cite{BenKS14b} usually perform very well.
  The fact that, in the case of a real matrix, eigenvalues occur in conjugated pairs motivates \autoref{thm:adi:2step}.

  The problem with the so-called $V(u)$-shifts~\cite{BenKS14b}, however,
  is that $\alpha_k$ and $V_k$ are not chosen at the same time.
  Instead, $V_{k}$ is determined by $\alpha_k$ and the previous residual factor~$R_k$
  via formula~\eqref{eq:lyapunov:increments},
  while $\alpha_k$ is determined by some previous increments
  \begin{math}
    \tilde V := [
      V_{k-\ell-1} , V_{k-\ell-2} , \ldots , \linebreak[0] V_{k-\ell-u}
    ]
  \end{math}
  via
  \begin{math}
    \alpha_k \in\Lambda(\tilde V^\HT A\tilde V, \tilde V^\HT E\tilde V),
  \end{math}
  where $\ell\in\N$ denotes the number of steps taken since the last shift computation.
  Recall that the solution~$X$ has a low (numerical) rank.
  Therefore, we do not expect the space spanned by the increment~$V_k$ to deviate much from $\tilde V$,
  such that the offset~$\ell$ does not matter much.
\end{remark}

The arguments above lead to
the algorithm described by \citeauthorLanMS~%
\cite[Algorithm~3.1]{LanMS15}.
Recall, however, that our derivation does not require a zero matrix initial guess $X_0 = 0$.

Unfortunately, using the construction from the proof of \autoref{thm:adi:2step}
we obtain the algorithm described by \citeauthorBenLP~\cite[Algorithm~4]{BenLP08},
and not the more efficient formulation by \citeauthorBenKS~\cite[Algorithm~3]{BenKS13}.

However, the formulations by \citeauthorLanMS~\cite{LanMS15} to handle complex shifts
straight-forwardly apply to the case of a non-zero initial value in the \ac{ADI}
iteration.
We summarize the low-rank \Lyapunov{} \ac{ADI}
as derived by \autoref{thm:r(r)} and \autoref{rem:generalized eqs} in \autoref{alg:lyapunov}.
For the convergence criterion in \autoref{alg:converged} we use
\begin{equation}%
\label{eq:adi:convergence}
  \norm{r^k}
  = \norm{R_{k} T_{} R_{k}^\HT}
  \leq \begin{cases}
    \abstol_\text{ADI}
    \\
    \reltol_\text{ADI} \; \norm{GSG^\HT}
    =
    \reltol_\text{ADI} \; \norm{b}
    ,
  \end{cases}
\end{equation}
depending on whether $\abstol_\text{ADI}\in\R$
or $\reltol_\text{ADI}\in\R$ has been provided.
Either way, we evaluate all Frobenius norms of low-rank factorizations
using \autoref{alg:norm}~\cite{BenLP08,Pen00b}. Note that accumulation of \(Q\)
can be avoided~\cite{BenLP08},
\eg~using Householder~QR or (shifted) Cholesky~QR;
see~\cite{GolV13}.


\begin{algorithm}[t]
  \caption{Frobenius norm of a low-rank factorization}%
  \label{alg:norm}
  \KwIn{%
    matrices $Z\in\C^{n\times z}$ and $Y\in\C^{z\times z}$, $z\ll n$
  }
  \KwOut{%
    $\rho = \norm{\ZYZ^\HT}$
  }
  Compute \(R\) from the economy-size QR factorization $Z = QR$, \ie $R\in\C^{z\times z}$\;
  $\rho \gets \norm{R_{} Y_{} R^\HT}$\;
\end{algorithm}

\begin{algorithm}[t]
  \caption{Low-rank \Lyapunov{} \ac{ADI}}%
  \label{alg:lyapunov}
  \KwIn{%
    system matrices $A$, $E$, $G$, and $S$,
    initial value~$X_0 = Z_0 Y_0 Z_0^\HT$,
    parameters~$\{\alpha_0, \alpha_1, \ldots\}$
  }
  \KwOut{%
    $V_0, V_1, \ldots \in\C^{n\times m}$ and $T\in\C^{m\times m}$
    \chadded{comprising $Z$ and $Y$}
    such that $X \approx \ZYZ^\HT$ solves the
    \Lyapunov{} equation $AXE^\HT + EXA^\HT = -GSG^\HT$
  }
  Assemble initial residual factors:\linebreak
  \begin{math}
    R_0 \gets
    \begin{bmatrix}
      G & EZ_0 & AZ_0
    \end{bmatrix}
    ,
    \qquad
    T \gets
    \begin{bmatrix}
      S & \cdot & \cdot \\
      \cdot & \cdot & Y_0 \\
      \cdot & Y_0 & \cdot
    \end{bmatrix}
  \end{math}\;
  $k \gets 0$\;
  \Repeat{\label{alg:converged}converged}{%
    \eIf(\tcp*[f]{single step}){$\alpha_{k}$ is real}{%
      \label{alg:lyapunov:inc:real}
      Compute increment factor
      $V_{k} \gets {(A+\alpha_{k} E)}^{-1} R_{k}$\;
      Update residual factor
      $R_{k+1} \gets R_{k} - 2\Re(\alpha_{k}) E V_{k}$\;
      $k \gets k + 1$\;
    }(\tcp*[f]{double-step; $\alpha_{k+1} = \conj{\alpha_{k}}$ must hold}){
      \label{alg:lyapunov:inc:complex}
      Solve complex-valued system
      $\hat V_{k} \gets {(A+\alpha_{k} E)}^{-1} R_{k}$\;
      Compute increment factors:\linebreak%
      $\delta_{k} \gets \Re(\alpha_{k}) / \Im(\alpha_{k})$\linebreak%
      $V_{k} \gets \sqrt{2} \big( \Re(\hat V_{k}) + \delta_{k} \Im(\hat V_{k}) \big)$\linebreak%
      $V_{k+1} \gets \sqrt{2\delta_{k} + 2} \Im(\hat V_{k})$\;
      Update residual factor
      $R_{k+2} \gets R_{k} - 2\sqrt{2}\Re(\alpha_{k}) E V_{k}$\;
      $k \gets k + 2$\;
    }
  }
  Assemble solution factors, if needed:\linebreak
  \begin{math}
    Z \gets
    \begin{bmatrix}
      Z_0 & V_0 & V_1 & \ldots
    \end{bmatrix}
  \end{math}\linebreak
  \begin{math}
    Y \gets
    \operatorname{blockdiag}\big(
      Y_0,
      \enspace
      -2\Re(\alpha_0)T,
      \enspace
      -2\Re(\alpha_1)T,
      \enspace
      \ldots
    \big)
  \end{math}\;
\end{algorithm}

\begin{remark}[Other formulations of the \ac{ADI} iteration for Linear Matrix Equations]%
  Applying \autoref{thm:v(v)}, \eqIncFromInc,
  to the \Lyapunov{} operator split~\eqref{eq:lyapunov:opsplit}
  for \Cholesky-type low-rank iterates $X_{k} = Z_{k} Z_{k}^\HT$ and data $W=BB^\HT$
  yields the CF-ADI algorithm described by \citeauthorLiW~\cite{LiW02}.
  Applying the same reformulation as in equation~\eqref{eq:lyapunov:residual}
  leads to the LR-ADI as described by \citeauthorBenLP~\cite{BenLP08}.
  Recall, however, that our derivation does not require a zero matrix initial guess;
  that is, $X_0 = 0$ or $Z_0 = [\,]$.

  Applying \autoref{thm:v(v)}, \eqIncFromInc,
  to the Sylvester equation $AX-XB=W$
  leads to the fADI method described by \citeauthorBenLT~\cite{BenLT09}.
  Specializing \autoref{alg:css} to the Sylvester equation
  leads to the G-fADI described by \citeauthorBenK~\cite{BenK14},
  which iterates the residual alongside.
\end{remark}

\begin{remark}[Alternative Derivation]%
  Observe that $\Lyap(X_0 + \hat V) = 0$ is a \Lyapunov{} equation in the increment~$\hat V$,
  \begin{equation*}
    \Lyap(X_0 + \hat V)
    = RTR^\HT + A \hat V + \hat V A^\HT
    = 0.
  \end{equation*}
  where $R$ and $T$ are the \Lyapunov{} residual factors~\eqref{eq:lyapunov:residual:structure}.
  Applying the \ac{ADI} to solve for~$\hat V$ and starting with rank-zero $\hat V_0 = 0$,
  is equivalent to
  applying the \ac{ADI} to solve for~$X_0+\hat V$ and starting with $X_0$.
\end{remark}

\section{Applications and Numerical Experiments}%
\label{sec:applications}

In this section,
we recall two iterative algorithms that solve an \ac{ALE} at every step.
If we solve these \ac{ALE}s with another iterative method,
like the \ac{ADI},
it is natural to use the previous outer iterate as the initial guess for the inner method.

More specifically,
let $X_\ell$ denote the outer iterates, $\ell\in\N$,
and let $X_{\ell+1,k}$ denote the inner iterates, $k\in\N$,
whose final iterate will become $X_{\ell+1}$.
Then, it is natural to choose $X_{\ell+1,0} = X_\ell$ instead of, e.g., $X_{\ell+1,0} = 0$.

All computations in this section have been performed on an Intel~Xeon Silver~4110.
We evaluated the following shift strategies:
\begin{itemize}
  \item
    $\heuristic(10, 10, 10)$:
    Penzl's $\heuristic(l_0, k_+, k_-)$ strategy,
    which first computes eigenvalues based on
    $k_+$ Arnoldi iterations of $E^{-1}A$ and
    $k_-$ Arnoldi iterations of $A^{-1}E$,
    and selects $l_0$ of them based on a greedy heuristic~\cite{Pen00b}.
    If more than $l_0$ shifts are needed, we repeat these $l_0$-many selected
    shifts cyclically.
  \item
    $\heuristic(20, 30, 30)$
  \item
    \label{item:shifts:proj:heur}
    $\xheurproj{2}$:
    One of the self-generating projection strategies,
    which is called $V(u)$-shifts in~\cite[Section~5.3.1]{Kue16}.
    In our case $u=2$, to properly account for a potential \ac{ADI} double-step.
    Here, we order these Ritz values based on Penzl's heuristic.
  \item
    $\xdecrproj{2}$:
    The same $V(2)$-shifts ordered by decreasing real part.
    In the case of real-valued shifts,
    the shifts are equivalently ordered by increasing magnitude,
    since all such shifts must have a negative real part.
  \item
    $\xincrproj{2}$:
    The same $V(2)$-shifts ordered by increasing real part.
\end{itemize}
In general,
we expect fewer \ac{ADI} iterations for larger $l_0\in\N$ in the case of $\heuristic(l_0, k_+, k_-)$ shifts,
and even fewer iterations for the $\projection$ shifts.
In that notion, we call shifts to be better or worse than others.

\begin{remark}[Accuracy of Ritz Values]
  It is critical that Penzl's shifts are computed precisely as described
  in~\cite{Pen00b}, since it is by no means clear that the largest or smallest
  eigenvalues and corresponding eigenspaces are the most relevant for the
  solution process.
  For example, using the~$k_+$ largest and~$k_-$ smallest eigenvalues of $E^{-1}A$
  as computed by ArnoldiMethod.jl~\cite{StoN21} (version 0.2.0) compared to
  Penzl's procedure, limiting to only a few Arnoldi steps and Ritz values, being
  comparably rough approximations of the eigenvalues, leads to \qtyrange{5}{10}{\timesunit} more \ac{ADI} iterations.
  These values are too accurate and ignore relevant eigenspaces still
  covered by Penzl's Ritz values. When computing highly accurate eigenvalues,
  choosing those from the dominant poles of the underlying dynamical
  system, for which the \ac{ALE} is solved, can be beneficial~\cite[Section~8.1]{Saa09}.
\end{remark}

\begin{remark}[Order of Shifts and Eigenvalues]%
\label{rem:shift order}
  \citeauthorBenKS\ require that complex conjugated pairs remain adjacent,
  but do otherwise not specify the order in which the $V$-shifts shall be used~\cite{BenKS14b}.
  As we will see later,
  the \ac{ADI} can be quite susceptible to the order of the shifts.
  Given that, for example,
  MATLAB's \verb!eig! does not guarantee a particular order
  but appears to keep complex conjugated pairs adjacent,
  while Julia's \verb!eigvals! sorts the eigenvalues
  lexicographically by real and imaginary parts (similar to the $\xincrproj{2}$ shifts),
  the unaware user may observe drastically different convergence behaviors.
  Try, for example,
  \begin{equation*}
    A = \operatorname{blockdiag}\left(
      \begin{bmatrix}
        -3 & 4 \\
        -4 & 3
      \end{bmatrix}
      ,
      \begin{bmatrix}
        -4 & 3 \\
        -3 & 4
      \end{bmatrix}
      ,
      \begin{bmatrix}
        -3 & 2 \\
        -2 & 3
      \end{bmatrix}
      , -5, -3
    \right)
    .
  \end{equation*}
  Julia's order is problematic insofar as complex conjugated pairs are not adjacent,
  which is required for an \ac{ADI} double-step.
  Therefore, when sorting the eigenvalues based on their real part,
  we also sort by the absolute value of their imaginary parts,
  in order to keep complex conjugated pairs adjacent.
  Penzl's heuristic ensures this property by itself.
\end{remark}

We conclude this section's introduction by mentioning some implementation details.
The examples to follow will all lead to real-valued system matrices.
We therefore expect a real-valued solution~$ZYZ^\TT \in\R^{n\times n}$ as well;
see \autoref{thm:adi:2step}.
All Frobenius norms of $\ZYZ^\TT$ factorizations are evaluated using \autoref{alg:norm}.
The systems arising in the \ac{ADI} increment formulas~\eqref{eq:lyapunov:increments},
lines~\ref{alg:lyapunov:inc:real} and~\ref{alg:lyapunov:inc:complex} of \autoref{alg:lyapunov},
may have a low-rank updated structure,
\begin{equation}%
\label{eq:application:increment}
  \underbrace{A + \alpha_k E}_{\text{sparse}}
  +
  \underbrace{\vphantom{\alpha_kE}UV^\TT}_{\text{low-rank}}
  \in\C^{n\times n}
  ,
\end{equation}
where $U$ and $V$ are real and have few columns.
The only complex contribution will be from the \ac{ADI} shifts~$\alpha_k\in\C$.
In this case,
we apply the Sherman-Morrison-Woodbury formula;
\eg~\cite[Section~2.1.4]{GolV13}.
\unskip\footnote{%
  For more general examples,
  the structure of the low-rank part in~\eqref{eq:application:increment}
  may be $UDV^\TT$ for some $D\neq I$.
  In that case,
  the center terms~$S_\ell$ of the \Lyapunov{} equations to come (equations~\eqref{eq:newton:step} and~\eqref{eq:rosenbrock:step})
  need to be modified slightly,
  and the Sherman-Morrison-Woodbury formula requires a term~$D$.
  For ease of notation,
  we stick to the simpler case~\eqref{eq:application:increment}.
}

We perform a column compression of any factored representation $\ZYZ^\TT \in\R^{n\times n}$,
as described by \citeauthorLanMS~\cite{LanMS15},
every 10 low-rank additions/subtractions,
\begin{equation}
  Z_1Y_1Z_1^\TT \pm Z_2Y_2Z_2^\TT =
  \begin{bmatrix}
    Z_1 & Z_2
  \end{bmatrix}
  \underbrace{%
  \begin{bmatrix}
    Y_1 \\
    & \pm Y_2
  \end{bmatrix}
  }_{Y\mathrlap{\in\R^{k\times k}}}
  \begin{bmatrix}
    Z_1 & Z_2
  \end{bmatrix}^\TT
  ,
\end{equation}
or once the inner dimension reaches 50\% of the outer dimension; $k \geq 0.5n$.
The eigenvalues of the resulting inner matrix~$\hat Y$ are truncated such that
\begin{equation}
  \Lambda(\hat Y) =
  \big\{
    \lambda\in\Lambda(Y) :
    \abs{\lambda} \geq \max\{1,\rho(Y)\} \cdot k \cdot \umach
  \big\}
  ,
\end{equation}
where $\Lambda(\cdot)$ and $\rho(\cdot)$ denote the spectrum and spectral radius, respectively.
We further compress every low-rank \Lyapunov{} right-hand side~$W$
before solving $AX+XA^\TT = -W$,
unless mentioned otherwise.

\subsection{Algebraic \Riccati{} Equation}

We apply the Newton-Kleinman method to the \acf{ARE}
\begin{equation}
  \Ricc(X) :=
  C^\TT C + A^\TT X E + E^\TT X A - E^\TT X BB^\TT X E = 0,
  \label{eq:are}
\end{equation}
with matrices
\begin{equation}
  E, A \in\R^{n\times n},
  \quad
  B \in\R^{n\times m},
  \quad
  C \in\R^{q\times n},
\end{equation}
and $m,q\ll n$.
Due to the low rank of $C^\TT C$ and $BB^\TT$,
we can expect the solution to have a low numerical rank;
see Benner and Bujanovi\'{c}~\cite{BenB16}.
Hence,
we factorize the Newton iterates according to $X_\ell = Z_\ell Y_\ell Z_\ell^\TT$
with $Z_\ell \in\R^{n\times z_\ell}$ and $Y_\ell\in\R^{z_\ell\times z_\ell}$.
Adapted from \citeauthorBenLP~\cite{BenLP08},
the \(\ell\)\/th step~($\ell\in\N$) of the Kleinman formulation reads
\begin{equation}
  0 = \Lyap_\ell(X_{\ell+1}) :=
  G_\ell S_\ell G_\ell^\TT +
  A_\ell^\TT X_{\ell+1} E + E^\TT X_{\ell+1} A_\ell,
\label{eq:newton:step}
\end{equation}
where
\begin{equation}
\begin{alignedat}{2}
  A_\ell &= A - BB^\TT X_\ell E
         &&\in\R^{n\times n},
  \\
  G_\ell &= \begin{bmatrix}
    C^\TT & E^\TT X_\ell B
  \end{bmatrix}
         &&\in\R^{n\times(q+m)},
  \\
  S_\ell &=
  I
         &&\in\R^{(q+m)\times(q+m)}.
\end{alignedat}
\label{eq:newton:step:matrices}
\end{equation}
Note that $A_\ell$ has the structure of a sparse matrix updated by a low-rank
matrix product, using \(U=B\) and \(V^{\TT}=B^{\TT} X_{\ell} E\).
Let $X_{\ell+1,k}$ for $k\in\N$ denote the \ac{ADI} iterates during the solution of~\eqref{eq:newton:step}.
\chreplaced{%
Note further that although the value of~$G_\ell$ depends on the current iterate~$X_\ell$,
the dimensions of~$G_\ell$ are constant.
}{%
The dimension of the right-hand side factor~$G_\ell$ does not depend on the current iterate~$X_\ell$.
}
\unskip\footnote{
  Except for an outer initial value~$X_0=0$,
  which would allow $G_0 = C^\TT$ which has only $q$ columns.
  For simplicity, we did not implement this,
  nor do we perform a column-compression on the right-hand side.
}
Therefore, the \ac{ADI} started with $X_{\ell+1,0} = 0$ will have an advantage,
as its \Lyapunov{} residuals~$\Lyap_\ell(X_{\ell+1,k})$ will always have (at most) $q+m$ columns,
which determines the cost to solve the linear systems defining the \ac{ADI} increments.
The new \ac{ADI} with $X_{\ell+1,0}=X_\ell=Z_\ell Y_\ell Z_\ell^\TT$ having inner dimension $z_\ell$ will lead to
\Lyapunov{} residuals having up to $(q+m)+2z_\ell$ columns;
see formula~\eqref{eq:lyapunov:residual:structure}.
We do not yet use the common $E^\HT Z_\ell$ block in the outer low-rank factor,
which would allow factorization having inner dimension $q+2z_\ell < q+m+2z_\ell$ and, thus,
slightly reduced compression times.

The \Riccati{} residual of a low-rank factorization can naively be written in a low-rank form as
\begin{math}
  \Ricc(Z_\ell Y_\ell Z_\ell^\TT) = \RTR^\TT
  ,
\end{math}
where
\begin{equation}
  \begin{aligned}
    R &= \begin{bmatrix}
      C^\TT & A^\TT Z_\ell & E^\TT Z_\ell
    \end{bmatrix}
      &&\in\R^{n\times(q+2z_\ell)}
    \\
    T &= \begin{bmatrix}
      I & \cdot & \cdot \\
      \cdot & \cdot & Y_\ell \\
      \cdot & Y_\ell & -Y_\ell Z_\ell^\TT BB^\TT Z_\ell Y_\ell
    \end{bmatrix}
      &&\in\R^{(q+2z_\ell)\times(q+2z_\ell)}
    ,
  \end{aligned}
  \label{eq:are:naive residual}
\end{equation}
compare, e.g.,~\cite{BenLP08}.
We consider the Newton method to have converged once
\begin{equation}
  \norm{\Ricc(X_{\ell+1})} < \reltol_\mathrm{Newton} \norm{C^\TT C}
\end{equation}
for some user-provided $\reltol_\mathrm{Newton}$.
Next,
we describe how to configure the \ac{ADI} convergence criterion~\eqref{eq:adi:convergence}
based on three different variations of the Newton method.
First, for the classical Newton method we use
\begin{subequations}
\begin{equation}
  \reltol_\textup{\ac{ADI}} = \reltol_\textup{Newton} / 10
  .
\label{eq:newton:classical}
\end{equation}
Second,
to speed up the early Newton iterations,
we use an inexact Newton method
with variable forcing term~$\eta\in\R$
described by \citeauthorDemES~\cite{DemES82},
resulting in
\begin{equation}
  \abstol_\textup{\ac{ADI}} = \eta\norm{\Ricc(X_\ell)}
  .
\label{eq:newton:inexact}
\end{equation}
Third,
to speed up the later Newton steps with already small $\norm{\Ricc(X_\ell)}$,
we switch back to the classical Newton method
if the inexact bound~\eqref{eq:newton:inexact} becomes smaller than the classical bound~\eqref{eq:newton:classical}.
This ``hybrid'' Newton method results in
\begin{equation}
  \abstol_\textup{\ac{ADI}} =
  \max\big\{ \eta\norm{\Ricc(X_\ell)}, \enspace \tfrac{1}{10}\reltol_\textup{Newton}\norm{\Lyap_\ell(0)} \big\}
  .
\label{eq:newton:hybrid}
\end{equation}
\end{subequations}

Furthermore,
to keep the potential overshoot of the \Riccati{} residual in control,
which refers to an increasing \Riccati{} residual, as usually observed in the first Newton iterations,
we employ an Armijo line search.
\unskip\footnote{%
  Our implementation is rather naive in that it does not utilize the connection between $\Ricc(X)$
  and $\Lyap_\ell(X)$ described in~\cite[Equation~(5.2b)]{BenHSetal16}.
  However, the overall runtime cost of the line search was negligible (less than 1\% of the total runtime).
}
As described by \citeauthorBenHSetal~\cite{BenHSetal16},
the line search is applied if $\norm{\Ricc(\hat X_{\ell+1})} > 0.9 \norm{\Ricc(X_\ell)}$,
where $\hat X_{\ell+1}$ denotes the solution to the classical Newton step~\eqref{eq:newton:step}.

\begin{example}%
\label{ex:are}
We apply the procedure described above to the Steel Profile benchmark~\cite{morwiki_steel,BenS05b},
which is based on a semi-discretized heat transfer problem.
The corresponding matrices~\eqref{eq:dre:matrices}
have \mbox{$m=7$} inputs, \mbox{$q=6$} outputs,
and are available for several sizes~$n$.
For this work, we focus on the configuration \mbox{$n=5177$}.
However, the input matrix $B = 1000\hat B$ is a scalar multiple of the original
Steel Profile matrix~$\hat B$.
The factor $1000$ controls the relative weighting of control and
output costs of the corresponding linear quadratic regulator problem. In this
setting it corresponds to a factor \(10^{-6}\) on the control costs, decreasing
the regularity of the optimal control problem and, thus, making the \ac{ARE}
harder to solve. Consequently, we observe a higher overshoot of the \Riccati{}
residual norm and more Newton steps.
We use the quadratic forcing term
\begin{math}
  \eta = \min\{ 0.1, 0.9 \norm{\Ricc(X_\ell)} \}
\end{math}
also used by \citeauthorBenHSetal~\cite{BenHSetal16}.
The Newton tolerance is
\begin{math}
  \reltol_\textup{Newton} = 10^{-10}
  .
\end{math}
\end{example}

\afterpage{%
\begin{landscape}
\begin{table}[h]
  \centering
  \caption{%
    Total number of Newton iterations~$\ell_{\max}$
    and \ac{ADI} iterations~$k_\mathrm{total}$ performed,
    \chadded{the number of columns~$z_{\ell_{\max}}$ of the final outer solution factor (its rank),}
    as well as several run-time metrics
    to solve \ac{ARE}~\eqref{eq:are} arising from \autoref{ex:are}.
    All timings are in seconds.
    The \ac{ADI} is started from a zero value (old)
    or with the previous Newton iterate (new).
    A dash (--) indicates that the method did not converge.
    A speedup \num{>1} indicates that our new \ac{ADI} is faster.
  }%
  \label{tab:are}
  \resizebox{\linewidth}{!}{%
  \begin{tabular}{%
    ll
    S[table-format=4]S[table-format=4]
    S[table-format=2,table-column-width=8mm]S[table-format=2,table-column-width=8mm]
    S[table-format=3]S[table-format=3]
    S[table-format=2.1]S[table-format=2.1]
    S[table-format=2.1]S[table-format=2.1]
    S[table-format=2.1]S[table-format=2.1]
    S[table-format=2.1]S[table-format=2.1]
    S[round-precision=2,table-format=1.2]
  }
    \toprule
    {\multirow{2}{*}[-2pt]{Newton method}} &
    {\multirow{2}{*}[-2pt]{\ac{ADI} shifts}} &
    \multicolumn{2}{c}{\#ADI steps} &
    \multicolumn{2}{c}{\#Newton steps} &
    \multicolumn{2}{c}{Rank} &
    \multicolumn{2}{c}{$t_\mathrm{compress}$} &
    \multicolumn{2}{c}{$t_\mathrm{shifts}$} &
    \multicolumn{2}{c}{$t_\mathrm{solve}$} &
    \multicolumn{2}{c}{$t_\mathrm{total}$} &
    {\multirow{2}{*}[-2pt]{Speedup}}
    \\
    \cmidrule(lr){3-4}
    \cmidrule(lr){5-6}
    \cmidrule(lr){7-8}
    \cmidrule(lr){9-10}
    \cmidrule(lr){11-12}
    \cmidrule(lr){13-14}
    \cmidrule(lr){15-16}
&
&
    {old} & {new} &
    {old} & {new} &
    {old} & {new} &
    {old} & {new} &
    {old} & {new} &
    {old} & {new} &
    {old} & {new} &
    \\\midrule
    classical & $\heuristic(10, 10, 10)$ & 1273 & 756 & 15 & 15 & 146 & 151 & 10.6 & 18.4 & 3.7 & 3.2 & 31.8 & 24.6 & 52.0 & 57.9 & 0.90 \\
classical & $\heuristic(20, 30, 30)$ & 740 & 514 & 15 & 15 & 147 & 151 & 5.3 & 12.6 & 9.7 & 9.5 & 18.1 & 16.2 & 36.9 & 46.9 & 0.79 \\
classical & $\xdecrproj{2}$ & 941 & 771 & 15 & 15 & 146 & 151 & 7.5 & 24.0 & 0.6 & 0.8 & 23.0 & 28.3 & 36.6 & 67.4 & 0.54 \\
classical & $\xheurproj{2}$ & 748 & 805 & 15 & 15 & 148 & 151 & 5.8 & 19.5 & 0.4 & 1.2 & 17.0 & 26.5 & 29.1 & 58.2 & 0.50 \\
classical & $\xincrproj{2}$ & 963 & 1525 & 15 & 15 & 145 & 151 & 8.1 & 35.4 & 0.7 & 0.9 & 23.8 & 45.6 & 37.9 & 101.9 & 0.37 \\
classical w/ line search & $\heuristic(10, 10, 10)$ & 1175 & 676 & 12 & 12 & 146 & 151 & 9.0 & 16.0 & 2.9 & 2.1 & 23.9 & 16.7 & 40.5 & 42.2 & 0.96 \\
classical w/ line search & $\heuristic(20, 30, 30)$ & 595 & 367 & 12 & 12 & 147 & 151 & 4.2 & 8.3 & 7.6 & 6.9 & 14.0 & 12.2 & 30.2 & 35.4 & 0.85 \\
classical w/ line search & $\xdecrproj{2}$ & 718 & 595 & 12 & 12 & 146 & 151 & 6.2 & 19.2 & 0.4 & 0.6 & 16.3 & 23.3 & 28.1 & 55.7 & 0.51 \\
classical w/ line search & $\xheurproj{2}$ & 604 & 650 & 12 & 12 & 148 & 151 & 5.0 & 16.9 & 0.6 & 0.7 & 13.9 & 20.4 & 24.2 & 49.1 & 0.49 \\
classical w/ line search & $\xincrproj{2}$ & 771 & 1302 & 12 & 12 & 145 & 151 & 7.0 & 31.2 & 0.3 & 0.8 & 18.7 & 41.0 & 30.7 & 90.7 & 0.34 \\
inexact & $\heuristic(10, 10, 10)$ & 636 & 161 & 15 & 15 & 146 & 151 & 4.4 & 7.8 & 3.7 & 3.6 & 13.5 & 6.3 & 26.2 & 22.7 & 1.15 \\
inexact & $\heuristic(20, 30, 30)$ & 419 & 185 & 15 & 15 & 147 & 151 & 3.6 & 7.0 & 8.5 & 8.7 & 9.3 & 7.2 & 23.8 & 28.9 & 0.82 \\
inexact & $\xdecrproj{2}$ & {--} & 373 & {--} & 11 & {--} & 148 & {--} & 14.1 & {--} & 0.4 & {--} & 11.8 & {--} & 33.6 & {--} \\
inexact & $\xheurproj{2}$ & {--} & {--} & {--} & {--} & {--} & {--} & {--} & {--} & {--} & {--} & {--} & {--} & {--} & {--} & {--} \\
inexact & $\xincrproj{2}$ & {--} & {--} & {--} & {--} & {--} & {--} & {--} & {--} & {--} & {--} & {--} & {--} & {--} & {--} & {--} \\
inexact w/ line search & $\heuristic(10, 10, 10)$ & 589 & 142 & 12 & 12 & 146 & 151 & 4.7 & 6.5 & 2.6 & 2.6 & 13.0 & 5.2 & 24.1 & 18.9 & 1.27 \\
inexact w/ line search & $\heuristic(20, 30, 30)$ & 404 & 162 & 12 & 12 & 147 & 151 & 3.0 & 5.6 & 8.3 & 7.4 & 8.1 & 5.5 & 21.5 & 23.3 & 0.92 \\
inexact w/ line search & $\xdecrproj{2}$ & {--} & {--} & {--} & {--} & {--} & {--} & {--} & {--} & {--} & {--} & {--} & {--} & {--} & {--} & {--} \\
inexact w/ line search & $\xheurproj{2}$ & {--} & {--} & {--} & {--} & {--} & {--} & {--} & {--} & {--} & {--} & {--} & {--} & {--} & {--} & {--} \\
inexact w/ line search & $\xincrproj{2}$ & {--} & {--} & {--} & {--} & {--} & {--} & {--} & {--} & {--} & {--} & {--} & {--} & {--} & {--} & {--} \\
hybrid & $\heuristic(10, 10, 10)$ & 557 & 141 & 15 & 15 & 146 & 151 & 4.1 & 6.6 & 3.7 & 2.6 & 12.2 & 4.9 & 23.7 & 18.4 & 1.29 \\
hybrid & $\heuristic(20, 30, 30)$ & 373 & 155 & 15 & 15 & 147 & 151 & 2.2 & 6.4 & 10.3 & 8.8 & 7.4 & 5.4 & 22.8 & 27.7 & 0.82 \\
hybrid & $\xdecrproj{2}$ & {--} & 304 & {--} & 11 & {--} & 148 & {--} & 12.2 & {--} & 0.4 & {--} & 10.6 & {--} & 29.3 & {--} \\
hybrid & $\xheurproj{2}$ & {--} & {--} & {--} & {--} & {--} & {--} & {--} & {--} & {--} & {--} & {--} & {--} & {--} & {--} & {--} \\
hybrid & $\xincrproj{2}$ & {--} & {--} & {--} & {--} & {--} & {--} & {--} & {--} & {--} & {--} & {--} & {--} & {--} & {--} & {--} \\
hybrid w/ line search & $\heuristic(10, 10, 10)$ & 516 & 121 & 12 & 12 & 146 & 151 & 3.6 & 5.0 & 2.1 & 2.1 & 9.6 & 3.6 & 18.3 & 15.7 & 1.17 \\
hybrid w/ line search & $\heuristic(20, 30, 30)$ & 348 & 132 & 12 & 12 & 147 & 151 & 2.6 & 5.0 & 7.0 & 7.5 & 7.7 & 4.9 & 19.7 & 22.0 & 0.90 \\
hybrid w/ line search & $\xdecrproj{2}$ & {--} & {--} & {--} & {--} & {--} & {--} & {--} & {--} & {--} & {--} & {--} & {--} & {--} & {--} & {--} \\
hybrid w/ line search & $\xheurproj{2}$ & {--} & {--} & {--} & {--} & {--} & {--} & {--} & {--} & {--} & {--} & {--} & {--} & {--} & {--} & {--} \\
hybrid w/ line search & $\xincrproj{2}$ & {--} & {--} & {--} & {--} & {--} & {--} & {--} & {--} & {--} & {--} & {--} & {--} & {--} & {--} & {--} \\

    \bottomrule
  \end{tabular}%
  }
\end{table}
\end{landscape}
}

\autoref{tab:are} gives an overview on the number of \ac{ADI} and Newton iterations required
to solve the \ac{ARE}~\eqref{eq:are} arising from \autoref{ex:are},
as well as the accumulated run-time of the $\ZYZ^\TT$ columns compression, $t_\mathrm{compress}$,
the time to compute the \ac{ADI} shifts, $t_\mathrm{shifts}$,
the time of all the linear solves of \autoref{alg:lyapunov}, $t_\mathrm{solve}$,
as well as the total run-time of the Newton method.
The \ac{ADI} shift strategies are described at the beginning of this section.
We observe that enabling the line search reduces the number of Newton steps,
but only mildly reduces the number of \ac{ADI} steps,
as mentioned in~\cite{BenHSetal16}.
Switching to the inexact or hybrid Newton method has a much larger effect on the number of \ac{ADI} steps.
Replacing the zero initial \ac{ADI} value by the previous Newton iterate
further reduces the number of \ac{ADI} steps by up to \qty{4}{\timesunit} (\num{516} vs \num{121}).
This reduction is strongest for the simplest shift strategy, $\heuristic(10, 10, 10)$.
The old \ac{ADI} outperforms the new one in most configurations of the Newton method,
and its run-time is dominated by~$t_\mathrm{solve}$.
Only for the inexact and hybrid Newton method with $\heuristic(10, 10, 10)$ shifts,
our new \ac{ADI} has a slight run-time advantage.
In these cases, however, $t_\mathrm{compress}$ is the most expensive part of the
algorithm,
indicating further potential for improvements especially for our new \ac{ADI}, when only bad shifts are known.

We observe a certain dependence of the new \ac{ADI} on the order of the (projection) shifts.
By merely changing from the decreasing to the increasing order (see \autoref{rem:shift order}),
the number of \ac{ADI} iterations more than doubles (\num{595} vs \num{1302}).
The large number of \ac{ADI} iterations for $\xincrproj{2}$ shifts is somewhat reasonable;
refer to \autoref{sec:app:shift order} for an explanation.
Meanwhile, the old \ac{ADI} seems barely affected.
However, most configurations using inexact or hybrid Newton with projection shifts did not converge.
Only the new \ac{ADI} with $\xdecrproj{2}$ shifts made the inexact methods without line search converge.
Further inspection is needed to understand this phenomenon.

In terms of the number of \ac{ADI} iterations,
the optimal configuration for the old \ac{ADI} is the hybrid Newton method with line search and $\heuristic(20, 30, 30)$ shifts.
The new \ac{ADI} performed best for the same Newton method and the simpler $\heuristic(10, 10, 10)$ shifts.
Our present implementation of the heuristic shift strategy is not yet optimized,
which skews the total run-time of the old \ac{ADI} in favor of the simpler shifts.
Thus, currently, we observe a \qty{3}{\timesunit} reduction in the number of \ac{ADI} steps (\num{348} vs \num{121}),
and a modest speedup of about \qty{17}{\%} in favor of the new \ac{ADI} (\qty{18.3}{\second} vs \qty{15.7}{\second})
when compared to the old \ac{ADI} with the lowest number of \ac{ADI} steps and run-time, respectively.
However,
for any configuration other than the inexact Newton methods with the simplest shift strategy,
the old \ac{ADI} outperforms our new \ac{ADI}.
More research is needed in how to select the optimal Newton configuration and \ac{ADI} shifts.

For the remainder of this subsection,
we focus on configuration with the lowest run-time for either \ac{ADI},
namely the hybrid Newton method with line search using $\heuristic(10, 10, 10)$ shifts.
\autoref{fig:are:macro} shows that the new \ac{ADI} requires much fewer iterations
per Newton step; as expected.
We further see that the number of old \ac{ADI} iterations per step does not increase by much beyond Newton step 10
due to the classical bound becoming active in formula~\eqref{eq:newton:hybrid} for the last three Newton iterations.
However,
the linear systems to be solved for the \ac{ADI} increments~\eqref{eq:lyapunov:increments} have more columns for the non-zero initial guess;
\qty{6}{\timesunit} as many (78 vs 13) at Newton step~7.
Despite this imbalance,
the new \ac{ADI} requires slightly fewer linear system solves overall (6457 vs 6708).
In total, we observe a \qty{6.0}{\second} reduction in $t_\mathrm{solve}$
at the cost of a \qty{1.4}{\second} increase in $t_\mathrm{compress}$.
In the future, we need to optimize our implementations computing the heuristic shifts as well as the low-rank column compression.

\begin{figure}
  \small
  \pgfplotsset{
    every axis/.append style={
      cycle list name=mymarks,
      width = .82\linewidth,
      height = .1\linewidth,
      scale only axis,
      yticklabel style = {overlay},
    },
  }
  \pgfplotstableread{plots/newton-adi/data/Rail5177_adi_kwargs=_maxiters=1000,shifts=Cyclic_Heuristic_10,_10,_10____newton_kwargs=_inexact=true,inexact_hybrid=true,linesearch=true,maxiters=20,reltol=1e-10__beta=1000.csv}\tablefigone

  \centering
  \begin{tikzpicture}
  \begin{axis}[%
    title = {Number of columns $z_\ell$ per Newton step.},
    xticklabel = \empty,
    ]

    \addplot table[x index = 0, y index = 7] {\tablefigone};
    \addplot table[x index = 0, y index = 8] {\tablefigone};
  \end{axis}
\end{tikzpicture}
  \begin{tikzpicture}
  \begin{axis}[%
    title = {Number of \ac{ADI} iterations until convergence per Newton step.},
    ymin = -5,
    ymax = 85,
    xticklabel = \empty,
    legend to name = {fig:are:macro:legend},
    ]

    \addplot table[x index = 0, y index = 1] {\tablefigone};
    \label{fig:are:macro:initzero}
    \addplot table[x index = 0, y index = 2] {\tablefigone};
    \label{fig:are:macro:initprev}
    \legend{old \ac{ADI}, new \ac{ADI}}
  \end{axis}
\end{tikzpicture}
  \begin{tikzpicture}
  \begin{axis}[
    title = {Size of the linear systems comprising the \ac{ADI} increment per Newton step.},
    ymin = -5,
    ymax = 85,
    xticklabel = \empty,
    ]

    \addplot table[x index = 0, y index = 3] {\tablefigone};
    \addplot table[x index = 0, y index = 4] {\tablefigone};
  \end{axis}
\end{tikzpicture}
  \begin{tikzpicture}
  \begin{axis}[
    title = {Total number of linear solves up to the Newton step.},
    xlabel = {Newton step},
    ]

    \addplot table[x index = 0, y index = 5] {\tablefigone};
    \addplot table[x index = 0, y index = 6] {\tablefigone};
  \end{axis}
\end{tikzpicture}
  \ref*{fig:are:macro:legend}
  \caption{%
    Newton method (hybrid w/ line search) applied to solve \ac{ARE}~\eqref{eq:are} arising from \autoref{ex:are}.
    \ac{ADI} shifts: $\heuristic(10, 10, 10)$.
    The \ac{ADI} is started from a zero value (\ref*{fig:are:macro:initzero})
    or with the previous Newton iterate (\ref*{fig:are:macro:initprev}).
  }%
  \label{fig:are:macro}
\end{figure}

If we plot the \Riccati{} and \Lyapunov{} residuals for all inner \ac{ADI} iterations directly one after the other,
we obtain \autoref{fig:are:micro}.
When starting the next Newton step's \ac{ADI} with the current Newton iterate,
$X_{\ell+1,0} = X_\ell$,
the corresponding \Riccati{} residual obviously does not change.
Meanwhile, when starting with the zero matrix,
$X_{\ell+1,0} = 0$,
the normalized \Riccati{} residual will jump back to~$1$.
Both effects are clearly visible in \autoref{fig:are:micro};
for the curve of the old \ac{ADI} it marks the start of every Newton iteration.
The curve of the new \ac{ADI} jumps back to~$1$ only due to the single line search applied after the first Newton step.
Furthermore, we observe a stagnation of the naive \Riccati{} residual~\eqref{eq:are:naive residual}
during the \ac{ADI} towards the end of every Newton step irrespective of its initial value (solid lines),
even though the \Lyapunov{} residual keeps getting smaller (dashed lines).
\citeauthorBenHSetal{} describe a more efficient representation of the \Riccati{} residual~$\Ricc(\cdot)$
in relation to the \Lyapunov{} residual~$\Lyap_\ell(\cdot)$ and the change in
the thin rectangular matrix $B^\TT X_\ell E \in\R^{m\times n}$~\cite[Equation~5.2b]{BenHSetal16}.
This representation allows to monitor the outer Riccati residual in the inner
\ac{ADI} iteration, which we plan to add to our implementation in the
future. Then the ADI can stop early whenever a sufficient decrease condition for
the Newton step is fulfilled.

We recommend to investigate
the cause of the aforementioned stagnation of the Riccati residual.
Furthermore, a hybrid \ac{ADI} approach should be studied,
that switches from the zero initial value to the non-zero one,
once the hybrid Newton criterion~\eqref{eq:newton:hybrid} resolves
to the classical condition~\eqref{eq:newton:classical}.

\begin{figure}
  \small
  \centering
  \begin{tikzpicture}
  \pgfplotstableread{plots/newton-adi/data/normalized_residual/Rail5177_adi_initprev=true_adi_kwargs=_maxiters=1000,shifts=Cyclic_Heuristic_10,_10,_10____newton_kwargs=_inexact=true,inexact_hybrid=true,linesearch=true,maxiters=20,reltol=1e-10__beta=1000.csv}\tablefigtwoprev
  \pgfplotstableread{plots/newton-adi/data/normalized_residual/Rail5177_adi_initprev=false_adi_kwargs=_maxiters=1000,shifts=Cyclic_Heuristic_10,_10,_10____newton_kwargs=_inexact=true,inexact_hybrid=true,linesearch=true,maxiters=20,reltol=1e-10__beta=1000.csv}\tablefigtwozero
  \begin{axis}[
    width = .82\linewidth,
    height = 0.41\linewidth,
    scale only axis,
    mark repeat = 50,
    xlabel = {Overall \ac{ADI} step},
    yticklabel style = {overlay},
    ymode = log,
    ytickten = {-12, -8, ..., 0},
    cycle list name = fourlines,
    legend transposed = true,
    legend cell align = left,
    legend style={font=\scriptsize,at={(0.98,0.02)},anchor=south east}
    ]

    \addplot table[x index = 0, y index = 2] {\tablefigtwozero};
    \addlegendentry{Riccati residial; old \ac{ADI}};
    \addplot table[x index = 0, y index = 1] {\tablefigtwozero};
    \addlegendentry{\Lyapunov{} residial; old \ac{ADI}};

    \addplot table[x index = 0, y index = 2] {\tablefigtwoprev};
    \addlegendentry{Riccati residial; new \ac{ADI}};
    \addplot table[x index = 0, y index = 1] {\tablefigtwoprev};
    \addlegendentry{\Lyapunov{} residial; new \ac{ADI}};
  \end{axis}
\end{tikzpicture}
  \caption{%
    Normalized \Riccati{} residuals $\norm{\Ricc(X_{\ell+1,k})} / \norm{\Ricc(0)}$
    following the naive formula~\eqref{eq:are:naive residual}, and
    normalized \Lyapunov{} residuals $\norm{\Lyap_\ell(X_{\ell+1,k})} / \norm{\Lyap_\ell(0)}$
    following \autoref{alg:lyapunov},
    over the course of all \ac{ADI} iterations~$k$ during all Newton steps~$\ell$.
    Newton method: hybrid w/ line search.
    \ac{ADI} shifts: $\heuristic(10, 10, 10)$.
    The \ac{ADI} is started from a zero value (old)
    or with the previous Newton iterate (new).
  }%
  \label{fig:are:micro}
\end{figure}

\subsection{Differential \Riccati{} Equation}

We apply a first-order Rosenbrock scheme, the implicit Euler method,
to the \acf{DRE}
\begin{equation}
  E^\TT \dot X E = \Ricc(X) := C^\TT C + A^\TT X E + E^\TT X A - E^\TT X BB^\TT X E
  ,
  \qquad
  E^\TT X(t_0) E = C^\TT C
  ,
  \label{eq:dre}
\end{equation}
where, again,
\begin{equation}
  E, A \in\R^{n\times n},
  \quad
  B \in\R^{n\times m},
  \quad
  C \in\R^{q\times n},
  \label{eq:dre:matrices}
\end{equation}
and $m,q\ll n$.
As $C^\TT C$ and $BB^\TT$ are positive semi-definite,
equation~\eqref{eq:dre} has a unique solution;
see, \eg~\cite[Theorem~4.1.6]{AboFIetal03}.
Motivated by Stillfjord~\cite{Sti18} and following \citeauthorLanMS~\cite{LanMS15},
we factorize the solution according to
\begin{math}
  X_\ell = Z_\ell Y_\ell Z_\ell^\TT
  \approx
  X(t_0+\ell\tau)
\end{math}
with
\begin{equation}
  Z_\ell \in\R^{n\times z_\ell},
  \quad
  Y_\ell \in\R^{z_\ell\times z_\ell},
\end{equation}
where $z_\ell \ll n$.
Thus, every Rosenbrock step reads
\begin{equation}
  \Lyap_\ell(X_{\ell+1}) :=
  G_\ell S_\ell G_\ell^\TT +
  A_\ell^\TT X_{\ell+1} E + E^\TT X_{\ell+1} A_\ell
  = 0
\label{eq:rosenbrock:step}
\end{equation}
where
\begin{equation}
\begin{alignedat}{2}
  A_\ell &= A - \tfrac{1}{2\tau} E - BB^\TT X_\ell E
         &&\in\R^{n\times n}
  \\
  G_\ell &= \begin{bmatrix}
    C^\TT & E^\TT Z_\ell
  \end{bmatrix}
         &&\in\R^{n\times (q + z_\ell)}
  \\
  S_\ell &=
  \begin{bmatrix}
    I \\
    & Y_\ell Z_\ell^\TT BB^\TT Z_\ell Y_\ell + \tfrac{1}{\tau} Y_\ell
  \end{bmatrix}
         &&\in\R^{(q+z_\ell)\times(q+z_\ell)}
  .
\end{alignedat}
\label{eq:dre:adi:matrices}
\end{equation}
Note, again, that $A_\ell$ has the structure of a sparse matrix updated by a low-rank matrix product.
This time, however, the inner dimension of the right-hand side $G_\ell S_\ell G_\ell^\TT$,
and therefore the corresponding \Lyapunov{} residual~$\Lyap_\ell(X_{\ell+1,k})$
both depend on the rank of the current Rosenbrock iterate~$X_\ell$.
More precisely,
the rank is at most $(q+z_\ell)+2z_\ell$;
see formula~\eqref{eq:lyapunov:residual:structure}
\chadded{with $g=q+z_\ell$ and $z=z_\ell$}.
Again, we do not yet utilize the common $E^\TT Z_\ell$ in the outer low-rank factor,
which would allow for a factorization having inner dimension $q+2z_\ell < q+3z_\ell$ and, thus, reduced compression times.

\begin{example}%
\label{ex:dre}
We apply the procedure described above to the Steel Profile benchmark~\cite{morwiki_steel,BenS05b},
see \autoref{ex:are}.
The time span is $[t_0, t_f] = [0, 4500]$,
which we discretize in $\ell_{\max}=45$ or $450$ equidistant segments.
That is, $\tau=100$ or $10$.
The convergence criterion~\eqref{eq:adi:convergence} is based on $\reltol_\textup{\ac{ADI}} = 10^{-10}$;
\end{example}

\afterpage{%
\begin{landscape}
\begin{table}
  \centering
  \caption{%
    Total number of \ac{ADI} iterations~$k_\mathrm{total}$ performed,
    \chadded{the maximum number of columns~$\max_\ell z_\ell$ of the outer solution factor (maximum rank),}
    as well as several run-time metrics
    to solve the \ac{DRE}~\eqref{eq:dre} arising from \autoref{ex:dre}.
    All timings are in seconds.
    The \ac{ADI} is started from a zero value (old)
    or with the previous Rosenbrock iterate (new).
    A speedup \num{>1} indicates that our new \ac{ADI} is faster.
  }%
  \label{tab:dre}
  \resizebox{\linewidth}{!}{%
  \begin{tabular}{%
    S[table-format=3]
    l
    S[table-format=5]S[table-format=4]
    S[table-format=3]S[table-format=3]
    S[table-format=5.1]S[table-format=3.1]
    S[table-format=3.1]S[table-format=3.1]
    S[table-format=4.1]S[table-format=3.1]
    S[table-format=5.1]S[table-format=4.1]
    S[round-precision=2,table-format=2.2]
  }
    \toprule
    {\multirow{2}{\widthof{\#Rosenbrock}}[-2pt]{\centering \#Rosenbrock steps}} &
    {\multirow{2}{*}[-2pt]{\ac{ADI} shifts}} &
    \multicolumn{2}{c}{\#ADI steps} &
    \multicolumn{2}{c}{Rank} &
    \multicolumn{2}{c}{$t_\mathrm{compress}$} &
    \multicolumn{2}{c}{$t_\mathrm{shifts}$} &
    \multicolumn{2}{c}{$t_\mathrm{solve}$} &
    \multicolumn{2}{c}{$t_\mathrm{total}$} &
    {\multirow{2}{*}[-2pt]{Speedup}}
    \\
    \cmidrule(lr){3-4}
    \cmidrule(lr){5-6}
    \cmidrule(lr){7-8}
    \cmidrule(lr){9-10}
    \cmidrule(lr){11-12}
    \cmidrule(lr){13-14}
&
&
    {old} & {new} &
    {old} & {new} &
    {old} & {new} &
    {old} & {new} &
    {old} & {new} &
    {old} & {new} &
    \\\midrule
    45 & $\heuristic(10, 10, 10)$ & 1395 & 671 & 145 & 152 & 211.8 & 41.2 & 7.6 & 7.6 & 53.5 & 18.8 & 314.7 & 89.5 & 3.52 \\
45 & $\heuristic(20, 30, 30)$ & 1170 & 683 & 144 & 152 & 177.8 & 43.4 & 24.3 & 24.1 & 44.7 & 19.0 & 282.5 & 109.9 & 2.57 \\
45 & $\xdecrproj{2}$ & 5685 & 550 & 144 & 152 & 971.1 & 72.7 & 3.7 & 2.1 & 222.5 & 19.3 & 1357.7 & 119.6 & 11.35 \\
45 & $\xheurproj{2}$ & 1266 & 821 & 145 & 152 & 201.0 & 49.3 & 3.9 & 1.6 & 49.9 & 22.2 & 293.7 & 97.3 & 3.02 \\
45 & $\xincrproj{2}$ & 4247 & 2172 & 142 & 152 & 670.3 & 181.8 & 3.7 & 1.7 & 164.4 & 64.1 & 958.7 & 296.0 & 3.24 \\
450 & $\heuristic(10, 10, 10)$ & 9450 & 3473 & 144 & 155 & 1572.3 & 127.9 & 172.5 & 133.8 & 562.4 & 93.3 & 2742.8 & 536.7 & 5.11 \\
450 & $\heuristic(20, 30, 30)$ & 9000 & 3392 & 144 & 155 & 1626.2 & 139.6 & 243.6 & 285.9 & 487.2 & 93.8 & 2849.3 & 706.9 & 4.03 \\
450 & $\xdecrproj{2}$ & 54657 & 1306 & 141 & 154 & 10727.1 & 73.6 & 38.2 & 8.2 & 3525.8 & 54.0 & 16405.3 & 313.2 & 52.38 \\
450 & $\xheurproj{2}$ & 8247 & 3739 & 144 & 152 & 1511.3 & 129.8 & 38.6 & 10.1 & 538.5 & 119.0 & 2484.7 & 469.9 & 5.29 \\
450 & $\xincrproj{2}$ & 25487 & 9693 & 144 & 151 & 4677.9 & 396.3 & 37.3 & 12.4 & 1646.7 & 322.1 & 7401.2 & 1037.6 & 7.13 \\

    \bottomrule
  \end{tabular}%
  }
\end{table}
\end{landscape}
}

\autoref{tab:dre} gives an overview on the number of \ac{ADI} iterations required
to solve the \ac{DRE}~\eqref{eq:dre} arising from \autoref{ex:dre}.
The timings are as described for \autoref{tab:are}.
Refer to the beginning of this section for an explanation on the shift strategies chosen.
Changing the initial \ac{ADI} guess from the zero matrix to the previous Rosenbrock iterate~$X_{\ell-1}$
always reduced the overall number of \ac{ADI} iterations.
In contrast to the previous section,
the run-time is always dominated by $t_\mathrm{compress}$.
This motivates, again, to improve the implementation of the low-rank column compression.

For this application, both \acp{ADI} are susceptible to the order of the shifts.
Focusing on the projection shifts for \num{450} Rosenbrock steps,
the number of \ac{ADI} iterations varies by more than \qty{6}{\timesunit} by merely changing the order of the shifts (old: \num{54657} vs \num{8247}, new: \num{9693} vs \num{1306}).
In terms of the number of iterations,
the old \ac{ADI} performs best for $\xheurproj{2}$ shifts and
appears to degenerate for $\xdecrproj{2}$ shifts.
Meanwhile, the new \ac{ADI} performs best for this order;
taking less than 3 steps on average;
and mediocre for $\xheurproj{2}$ shifts.

\autoref{fig:dre} shows some more detail for one of the configurations of the table;
the general shape is the same for all configurations.
The \ac{DRE}~\eqref{eq:dre} is very stiff during early Rosenbrock steps~$\ell\in\N$.
Once the integrator reaches a more transient regime for large-enough~$\ell$,
using the non-zero initial \ac{ADI} value reduces the number of \ac{ADI} steps drastically;
as expected.
Surprisingly, however,
the number of columns comprising the linear systems to compute
the \ac{ADI} increments~\eqref{eq:lyapunov:increments}
decreases as well.
Apparently, the previous Rosenbrock iterate~$X_\ell$ contains enough information
to cause the \Lyapunov{} residual~$\Lyap_\ell(X_\ell)$ to vanish in many directions of the corresponding subspace.
Overall, the new \ac{ADI} has to solve fewer \emph{and} cheaper linear systems.
Consequently,
the finer the temporal resolution,
the larger the expected speedup of the new \ac{ADI}.
\unskip\footnote{%
  Adaptive time stepping may reduce this advantage of our new \ac{ADI},
  as regions allowing for large time steps are the ones that would benefit most from our initial values.
  Therefore, as a stopgap specifically for \autoref{ex:dre},
  we select temporal resolutions that are rather coarse from an engineer's perspective.
}
This notion is confirmed by \autoref{tab:dre}.
When increasing the number of Rosenbrock steps by \qty{10}{\timesunit},
the total number of iterations and run-time of the new \ac{ADI}
only increase by roughly \qtyrange{2.5}{5}{\timesunit} and \qtyrange{2}{6}{\timesunit},
respectively, depending on the shifts.
Meanwhile,
the total number of iterations and run-time of the old \ac{ADI}
increase by roughly \qtyrange{6}{10}{\timesunit} and \qtyrange{7.5}{12}{\timesunit},
respectively, depending on the shifts.
Comparing the best shifts for each of the \acp{ADI} applied over \num{450} Rosenbrock steps,
we observe a \qty{6}{\timesunit} reduction in \ac{ADI} steps (\num{8247} vs \num{1306})
and an \qty{8}{\timesunit} reduction in run-time (\qty{2484.7}{\second} vs \qty{313.2}{\second})
in favor of our new \ac{ADI}.

\begin{figure}
  \small
  \pgfplotsset{
    every axis/.append style={
      cycle list name=mylines,
      width = .82\linewidth,
      height = .1\linewidth,
      scale only axis,
      xtick = {1, 50, 100, ..., 450},
      mark repeat = 50,
      yticklabel style = {overlay},
    },
  }
  \pgfplotstableread{plots/rosenbrock/data/Rail5177_adi_kwargs=_maxiters=200,reltol=1e-10,shifts=Cyclic_Heuristic_20,_30,_30____nsteps=450_tspan=_4500.0,_0.0_.csv}\tablefigthree
  \centering
  \begin{tikzpicture}
  \begin{axis}[
    title = {Number of columns $z_\ell$ per Rosenbrock step.},
    xticklabel = \empty,
    ]

    \addplot table[x index = 0, y index = 7] {\tablefigthree};
    \addplot table[x index = 0, y index = 8] {\tablefigthree};
  \end{axis}
\end{tikzpicture}
  \begin{tikzpicture}
  \begin{axis}[%
    title = {Number of \ac{ADI} iterations until convergence per Rosenbrock step.},
    ymin = -2,
    ymax = 22,
    xticklabel = \empty,
    legend to name = {fig:dre:legend},
    ]

    \addplot table[x index = 0, y index = 1] {\tablefigthree};
    \label{fig:dre:initzero}
    \addplot table[x index = 0, y index = 2] {\tablefigthree};
    \label{fig:dre:initprev}
    \legend{old \ac{ADI}, new \ac{ADI}}
  \end{axis}
\end{tikzpicture}
  \begin{tikzpicture}
  \begin{axis}[
    title = {Size of the linear systems comprising the \ac{ADI} increment per Rosenbrock step.},
    ymin = -15,
    ymax = 165,
    xticklabel = \empty,
    ]

    \addplot table[x index = 0, y index = 3] {\tablefigthree};
    \addplot table[x index = 0, y index = 4] {\tablefigthree};
  \end{axis}
\end{tikzpicture}
  \begin{tikzpicture}
  \begin{axis}[
    title = {Total number of linear solves up to the Rosenbrock step.},
    scaled y ticks = false,
    xlabel = {Rosenbrock step},
    ]

    \addplot table[x index = 0, y index = 5] {\tablefigthree};
    \addplot table[x index = 0, y index = 6] {\tablefigthree};
  \end{axis}
\end{tikzpicture}
  \ref*{fig:dre:legend}
  \caption{%
    Rosenbrock method applied to solve \ac{DRE}~\eqref{eq:dre} arising from \autoref{ex:dre}.
    \ac{ADI} shifts: $\heuristic(20, 30, 30)$.
    Rosenbrock step size $\tau=10$.
    The \ac{ADI} is started from a zero value (\ref*{fig:dre:initzero})
    or with the previous Rosenbrock iterate (\ref*{fig:dre:initprev}).
  }%
  \label{fig:dre}
\end{figure}

Note that, although prescribed,
the order of the heuristic shifts does not matter that much.
The heuristic strategies compute only few (here: 10 or 20) shifts at once.
Once all of them have been used,
the iterates and residuals coincide;
see \autoref{thm:permutation}.
The projection strategies, however,
compute twice the inner dimension of the \Lyapunov{} residual many at once (here: approx.~\num{80} to \num{250}).
For the vast majority of Rosenbrock steps,
the \ac{ADI} does converge after only a subset of these shifts has been used;
that is, \autoref{thm:permutation} does not apply.
Further research is needed to understand the effect of the shift order in such a scenario,
and how to select the optimal shift order (and subset of shifts).

\section{Conclusion}%
\label{sec:conclusion}

We introduced the notion of fully commuting splitting schemes to solve arbitrary linear systems,
and derived the \ac{ADI} method in that context.
This allowed us to extend the low-rank \Lyapunov{} \ac{ADI} for complex data to non-zero initial values.
Furthermore, we generalized the permutation invariance of \ac{ADI} iterates
to arbitrary fully commuting splitting schemes (\autoref{thm:permutation}),
as well as the existence of a real-valued \ac{ADI} double-step for complex-conjugated shifts
to arbitrary linear systems (\autoref{thm:adi:2step}).

We applied the extended low-rank \Lyapunov{} \ac{ADI} to a Newton and a Rosenbrock method
to solve an algebraic and a differential \Riccati{} equation, respectively.
For the Newton method we observed a \qty{4}{\timesunit} lower total number of \ac{ADI} steps for heuristic Penzl shifts,
but more expensive linear systems at every \ac{ADI} step.
Overall, for this application, our implementation only showed a modest \qty{17}{\%} run-time improvement over the old \ac{ADI}.
\chdeleted{%
Therefore, at the moment we doubt that our modifications to the Newton-ADI for \acp{ARE}
suffice to make the method competitive with the \ac{RADI}~\cite{BenBKetal18}.
}
For the Rosenbrock method, however, we observed a \qtyrange{2}{6}{\timesunit} lower total number of \ac{ADI} steps,
depending on the shifts,
whose linear systems have fewer columns,
resulting in an \qty{8}{\timesunit} speed-up in our implementation.

\section*{Code and Data Availability}

The algorithms have been implemented using Julia~\cite{BezEKetal17}
and are available at:
\begin{center}
  DOI \href{https://doi.org/10.5281/zenodo.10650859}{10.5281/zenodo.10650859}
\end{center}
The datasets analyzed in this paper are available at:
\begin{center}
  \begin{tabular}{rl}
    Newton: & DOI \href{https://doi.org/10.5281/zenodo.10650872}{10.5281/zenodo.10650872} \\
    Rosenbrock: & DOI \href{https://doi.org/10.5281/zenodo.10651124}{10.5281/zenodo.10651124}
  \end{tabular}
\end{center}

\section*{Acknowledgments}

We would like to thank Daniel Szyld~\orcidlink{0000-0001-8010-0391}
for pointing out the name \emph{nonstationary} splitting scheme.
We further thank Fan Wang~\orcidlink{0009-0000-8943-6908}
and Martin K\"{o}hler~\orcidlink{0000-0003-2338-9904} for their review of our codes
and independent verification of the reproducibility of the numerical experiments.

\appendix
\section{Order of \acs{ADI} shifts}%
\label{sec:app:shift order}

As we have observed in \autoref{sec:applications},
the low-rank \Lyapunov{} \ac{ADI} can be quite sensitive to the order of its shifts.
In this section,
we give some intuition on why the projection shifts ordered by increasing real part, $\xincrproj{2}$,
sometimes lead to large numbers of \ac{ADI} iterations.

Using the notation of \autoref{sec:splitting schemes},
the iteration map of the symmetry-preserving low-rank \Lyapunov{} \ac{ADI},
as derived in \autoref{sec:lyapunov},
is given by
\begin{equation}
  G_k(U) :=
  (M_k^{-1} N_k)(U) =
  \Cayley(A, \alpha_{k}) \, U \, \Cayley(A, \alpha_{k})^\HT
  ,
\end{equation}
using the operator split~\eqref{eq:lyapunov:opsplit}, $\beta_k := \conj{\alpha_{k}}$, and the Cayley transformation
\begin{equation}
  \Cayley(A, \alpha) := (A + \alpha I)^{-1} (A - \conj{\alpha} I)
  .
\end{equation}
If applied to a symmetric low-rank factorization $ZYZ^\HT$,
the iteration map effectively only operates on the outer factors~$Z$.
Recall that a nonstationary splitting scheme converges iff $\rho(G_k\cdots G_0) \to 0$ as $k\to\infty$.
A sufficient condition is thus
\begin{equation}
  \rho\big(
    \Cayley(A, \alpha_{k})
    \cdots
    \Cayley(A, \alpha_{0})
  \big)
  \to 0
\end{equation}
as $k\to\infty$.
As the spectral radius is sub-multiplicative, the upper bound
\begin{equation}%
\label{eq:app:upper bound}
  \rho\big(
    \Cayley(A, \alpha_{k})
    \cdots
    \Cayley(A, \alpha_{0})
  \big)
  \leq
  \rho\big( \Cayley(A, \alpha_{k}) \big)
  \cdots
  \rho\big( \Cayley(A, \alpha_{0}) \big)
  =: \hat \rho_k
\end{equation}
holds. It can be computed as follows.
By, \eg~\cite[Proposition~2.16]{Kue16},
the spectral radius of such a Cayley transformation is given as
\begin{equation}
  \rho(\mathcal C(A, \alpha)) =
  \max\left\{
    \frac{\abs{\lambda - \conj{\alpha}}}{\abs{\lambda + \alpha}} :
    \lambda\in\Lambda(A)
  \right\}
  .
\end{equation}

If we chose the parameters $\{\alpha_0, \ldots, \alpha_k\}$
to be (a subset of) the spectrum $\Lambda(A)$,
it holds $\rho(\Cayley(A, \alpha_k)) \leq 1$ for any~$k$.
Furthermore,
every eigenvector~$v$ of~$A$ to the eigenvalue~$\lambda$
lives in the null space of~$\Cayley(A, \lambda)$.
Consequently, if the whole $\Lambda(A)$ is chosen,
the norm of the combined step
\begin{math}
  \rho\big(
    \Cayley(A, \alpha_{k})
    \cdots
    \Cayley(A, \alpha_0)
  \big)
\end{math}
will be zero.
On the other hand,
any individual $\rho(\Cayley(A, \alpha_k)) > 0$.
That is, $\hat\rho_k$ is not a sharp upper bound.

\autoref{fig:app:combined} shows the aforementioned upper bound~$\hat\rho_0 = \rho(\Cayley(A,\alpha_0))$ and~$\hat\rho_k$
for a smaller configuration of the Steel Profile~\cite{morwiki_steel,BenS05b} benchmark having matrix dimension~$n=371$.
The corresponding (generalized) spectrum is contained in the negative half plane,
and the whole set is chosen as the parameter set $\{\alpha_0, \ldots, \alpha_{n-1}\}$.
These parameters may be ordered
by increasing real part (similar to $\xincrproj{2}$),
by decreasing real part (similar to $\xdecrproj{2}$),
or by Penzl's heuristic (similar to $\xheurproj{2}$);
see \autopageref{item:shifts:proj:heur}.
Note that Penzl's heuristic~\cite{Pen00b}
chooses the first shift~$\alpha_0$ to be the one minimizing~$\rho(\Cayley(A,\cdot)) = \hat\rho_0$,
and continues greedily to select~$\alpha_k$ by minimizing
\begin{equation}%
\label{eq:app:penzl}
  \prod_{i=0}^{k-1}
  \frac{\abs{\alpha_k - \alpha_i}}{\abs{\alpha_k + \alpha_i}}
\end{equation}
for given $\{\alpha_0, \ldots, \alpha_{k-1}\}$.
\unskip\footnote{%
  The heuristic includes $\alpha_{k+1} := \conj{\alpha_k}$ if $\alpha_k\in\C\setminus\R$.
  For the present example, however, this does not occur.
}
Quantity~\eqref{eq:app:penzl} is related to~$\hat\rho_k$, but not the same.
\unskip\footnote{%
  Note the lack of conjugation in the numerators.
  Furthermore, each factor containing~$\alpha_i$
  is in general only a lower bound to~$\rho(\Cayley(A, \alpha_i))$,
  which renders quantity~\eqref{eq:app:penzl} smaller than $\hat\rho_{k-1}$.
}
Consequently, the heuristic is the only one for which
the corresponding~$\hat\rho_0$ is visibly smaller than~$1$.

\begin{figure}
  \centering
  \pgfplotstableread{plots/appendix/single_step_norm.csv}\tablefigappsingle
\pgfplotstablecreatecol[%
  create col/expr={-log10(-\thisrow{eigenvalue})}, 
]{neigenvalue}\tablefigappsingle
\pgfplotstableread{plots/appendix/combined_step_norm.csv}\tablefigappcombined
\begin{tikzpicture}[baseline]
  \begin{axis}[
    title = {Spectral radius $\rho(\Cayley(A, \lambda))$},
    xlabel = {\strut Eigenvalue $\lambda\in\Lambda(A)$},
    xtick = {-1, 1, ..., 5},
    xticklabel = {\strut$-10^{\pgfmathprintnumber{-\tick}}$},
    y tick label style = {/pgf/number format/precision = 4},
    cycle list name = mymarks,
    width = 0.4\linewidth,
    scale only axis,
    ]

    \addplot+[mark = x] table[x = neigenvalue, y index = 1] {\tablefigappsingle};
  \end{axis}
\end{tikzpicture}
\hfill
\begin{tikzpicture}[baseline]
  \begin{axis}[
    title = {Upper bound $\hat\rho_k$},
    xlabel = {\strut\ac{ADI} step~$k$},
    typeset ticklabels with strut,
    no markers,
    cycle list = {
      {mycolor1},
      {mycolor2},
      {mycolor3},
    },
    legend cell align = left,
    legend columns = 1,
    legend style = {font = \scriptsize, opacity = .8},
    width = 0.4\linewidth,
    scale only axis,
    ]

    \addplot table[x expr = \coordindex, y index = 1] {\tablefigappcombined};
    \addlegendentry{increasing real part}
    \addplot table[x expr = \coordindex, y index = 2] {\tablefigappcombined};
    \addlegendentry{Penzl's heuristic}
    \addplot table[x expr = \coordindex, y index = 0] {\tablefigappcombined};
    \addlegendentry{decreasing real part}
  \end{axis}
\end{tikzpicture}
  \caption{%
    Spectral radius of Cayley transformations associated to spectrum of~$A$, and
    upper bound $\hat\rho_k$ on the norm of parts of the iteration map
    over the course of multiple \ac{ADI} iterations~$k$,
    for different permutations of the spectrum~$\Lambda(A)$.
  }%
  \label{fig:app:combined}
\end{figure}

Obviously, all upper bounds~$\hat\rho_k$ coincide for $k=n-1$,
as the underlying product of spectral radii~\eqref{eq:app:upper bound}
is formed over the whole set of parameters.
The decreasing order clearly yields a better a priori upper bound than the increasing order.
Interestingly, the heuristic order starts out smallest,
remains in between the other two orders for most iterations,
but comes out largest towards the last iterations.
Thus, for most cases,
we expect the $\xdecrproj{2}$ shifts to require the least number of \ac{ADI} steps until convergence,
followed by $\xheurproj{2}$ and $\xincrproj{2}$, in that order.
Unfortunately, however, this intuition can only be confirmed for the new \ac{ADI} in Tables~\ref{tab:are} and~\ref{tab:dre}.

\addcontentsline{toc}{section}{References}
\bibliographystyle{siam}
\bibliography{manuscript-etna.bib}

\end{document}